\documentclass[10pt]{amsart}
\usepackage[margin=1in]{geometry}
\usepackage{amsmath,amssymb,setspace}
\usepackage{indentfirst}
\setstretch{1}

\setlength{\parskip}{0pt plus 1pt minus 1pt}

\usepackage[small,nohug,heads=vee]{diagrams}
\diagramstyle[labelstyle=\scriptstyle]

\newtheorem{theorem}{Theorem}
\newtheorem{corollary}[theorem]{Corollary}

\newtheorem{proposition}[theorem]{Proposition}
\newtheorem{definition}[theorem]{Definition}

\DeclareMathOperator{\aut}{Aut}
\DeclareMathOperator{\inn}{Inn}
\DeclareMathOperator{\res}{Res}
\DeclareMathOperator{\ind}{Ind}

\DeclareMathOperator{\ints}{\mathbb{Z}}
\DeclareMathOperator{\id}{\text{id}}
\DeclareMathOperator{\irr}{\text{Irr}}
\DeclareMathOperator{\rep}{\bold{Rep}}

\begin{document}

\title{Hopf Modules and Representations of Finite Wreath Products}\author{Seth Shelley-Abrahamson}\date{August 2015}\maketitle

\begin{abstract} For a finite group $G$ one may consider the associated tower $S_n[G] := S_n \ltimes G^n$ of wreath product groups.  In \cite{Zel} Zelevinsky associates to such a tower a positive self-adjoint Hopf algebra (PSH-algebra) $R(G)$ as the infinite direct sum of the Grothendieck groups $K_0(\bold{Rep}-S_n[G])$.  In this paper, we study the interaction via induction and restriction of the PSH-algebras $R(G)$ and $R(H)$ associated to finite groups $H \subset G$.  A class of Hopf modules over PSH-algebras with a compatibility between the comultiplication and multiplication involving the Hopf $k^{th}$-power map arise naturally and are studied independently.  We also give an explicit formula for the natural PSH-algebra morphisms $R(H) \rightarrow R(G)$ and $R(G) \rightarrow R(H)$ arising from induction and restriction.  In an appendix, we consider the more general family of \emph{restricted wreath products} which are subgroups of wreath product groups analogous to the subgroups $G(m, p, n)$ of the wreath product cyclotomic complex reflection groups $G(m, 1, n)$.

\end{abstract}

\tableofcontents

\section{Introduction} In \cite{Zel} Zelevinsky introduced \emph{positive self-adjoint Hopf algebras} (PSH-algebras), a certain class of Hopf algebras with additional structure.  He showed that the axioms defining these objects are sufficiently rigid to permit a simple classification theorem, and this classification was subsequently used to study the complex representations of symmetric groups, finite wreath product groups $S_n \ltimes G^n$, and finite general linear groups.  In each of these cases, an associated PSH-algebra is constructed as an infinite direct sum of the relevant Grothendieck groups.  The multiplication is related to induction of representations, and the comultiplication is related to restriction.  For instance, for the symmetric groups the product of a class $[\sigma]$ of a representation of $S_k$ with a class $[\tau]$ of a representation of $S_l$ is given by taking the class in $K_0$ of the induced representation $\ind_{S_k \times S_l}^{S_{k + l}} (\sigma \otimes \tau)$.  The construction for finite wreath products is entirely analogous, and for the finite general linear groups the multiplication is given by parabolic induction.  The PSH-algebra axioms then encode a collection of essential properties of the representations, including Frobenius reciprocity and Mackey's theorem on the composition of the induction and restriction functors.  In the case of the symmetric groups, the PSH-algebra arising will be denoted $R$, and is isomorphic as a PSH-algebra to the Hopf algebra of integral symmetric functions.

In addition to considering PSH-algebras themselves, it is equally natural to consider their modules, and this perspective is made abundantly clear in van Leeuwen's study \cite{vanL} of finite symplectic and orthogonal groups.  In the case of the finite general linear groups, the Levi decomposition of parabolic subgroups gives rise naturally to a certain induction functor, parabolic induction, which associates to a pair of representations $\sigma$ and $\tau$ of the groups $GL_n(\mathbb{F}_q)$ and $GL_m(\mathbb{F}_q)$, respectively, a representation of the group $GL_{n + m}(\mathbb{F}_q)$.  Dual to this process one can construct parabolic restriction functors, and these functors underly the multiplication and comultiplication in the PSH-algebra associated to these finite general linear groups.  By contrast, the Levi decomposition for parabolic subgroups of the finite symplectic groups, for example, gives rise not to an algebra structure on the direct sum of Grothendieck groups but rather to a module/comodule structure over the PSH-algebra associated again to the finite general linear groups.  This Hopf module carries a strong compatibility between the action and coaction map which should be seen as a twisted version of the usual Hopf axiom for modules.

At the level of the Weyl groups, van Leeuwen's construction suggests that there should be a corresponding structure of twisted Hopf module over the PSH-algebra associated to the symmetric groups $S_n$, the Weyl groups of the finite general linear groups, on the direct sum of the Grothendieck groups of the type B/C Weyl groups $S_n[\ints/2\ints]$.  More generally, in the spirit of Zelevinsky, one may consider arbitrary finite wreath products.  More specifically, if $G$ is a finite group, one may consider the tower of wreath products $S_n[G] := S_n \rtimes G^n$.  Associated to this tower is a PSH-algebra $R(G)$, as shown in \cite[\S 7]{Zel}.  When $H \subset G$ is a subgroup, we will give $R(G)$ the natural structure of a Hopf module over $R(H)$.  Under certain conditions on $H$ and $G$, we will see that this Hopf module has a strong compatibility between the multiplication and comultiplication, in which the Hopf $[G : H]^{th}$-power map on $R(H)$ plays a key role.

We axiomatize the properties of these modules by defining ``$k$-PSH modules'' over any PSH-algebra, where $k \geq 0$ is an integer.  For example, in the setting just mentioned, we will show that $R(G)$ is a $[G : H]$-PSH module over $R(H)$.  We will prove a direct sum decomposition theorem for $k$-PSH modules in analogy with the tensor product decomposition theorem of Zelevinsky for PSH-algebras \cite[\S 2]{Zel} and with the direct sum decomposition theorem of van Leeuwen for twisted Hopf modules \cite{vanL}.  In the case of the $|G|$-PSH module $R(G)$ over the primitive PSH-algebra $R = R(1)$, we will give an explicit tensor product decomposition of $R(G)$ in analogy with the Wedderburn decomposition of $\mathbb{C} G$.

The structure of these modules essentially comes from the natural PSH-algebra morphisms $R(H) \rightarrow R(G)$ and $R(G) \rightarrow R(H)$ arising from induction and restriction, respectively.  For certain subgroups $H \subset G$, these morphisms have particularly nice structure, which is seen at the level of the Hopf modules in the form of the $k$-PSH property.  However, we are still able to obtain an interesting and explicit formula for these morphisms for arbitrary finite groups $H \subset G$.  This formula is given in terms of a ``matrix of Hopf powers'' corresponding to the matrix describing the induction and restriction of representations between the groups $G$ and $H$.  This formula generalizes in a natural way the formula for these maps given by Zelevinsky in the special case that $G$ is abelian and $H$ is trivial, in which case they are realized as iterated multiplication or comultiplication.

\section{Acknowledgements}

This paper represents in condensed form some of the results of my undergraduate thesis at Stanford University under the direction of Daniel Bump, to whom I would like to express my deepest gratitude.

\section{PSH-algebra Review: Definitions and the Decomposition Theorem}

Let $A$ be a commutative ring, and let $(H, \mu, \mu^*, e, e^*, T)$ be a Hopf algebra over $A$ with multiplication $\mu : H \otimes H \rightarrow H$, comultiplication $\mu^* : H \rightarrow H \otimes H$, unit $e : A \rightarrow H$, counit $e^* : H \rightarrow A$, and antipode $T : H \rightarrow H$.  We will refer to the Hopf algebra axiom that $\mu^*$ is a morphism of algebras as the \emph{Hopf axiom}.  We will use \emph{sumless Sweedler notation} to express the the comultiplication, writing $\mu^*(h) = h_{(1)} \otimes h_{(2)}$ with a sum of several simple tensors implied.

We will say the Hopf algebra $H$ is \emph{graded} if $H$ has an $A$-module grading $H = \bigoplus_{n \geq 0} H_n$ and if the maps $\mu, \mu^*, e, e^*, T$ are graded, where we give $H \otimes H$ a grading by $(H \otimes H)_n = \bigoplus_{k + l = n} H_k \otimes H_l$) and where we view $A$ as a graded $A$-module concentrated in degree 0.

We say a graded Hopf algebra is \emph{connected} if $e \colon A \rightarrow H_0$ and $e^* \colon H_0 \rightarrow A$ are mutually inverse isomorphisms, where $H_0$ is the degree 0 component of $H$.

A \emph{trivialized group} (T-group) is a free abelian group $M$ with a specified $\ints$-basis $\Omega = \Omega(M) \subset M$.  The elements of $\Omega$ are called the \emph{irreducible} elements of $M$.  This structure induces a positive-definite symmetric bilinear form $\langle \cdot, \cdot \rangle \colon M \times M \rightarrow \ints$ by declaring $\Omega$ an orthonormal basis.  We may then define the \emph{positive} elements of $M$ to be the elements $x \in M$ with $\langle x, \omega \rangle \geq 0$ for all $\omega \in \Omega$; these are the elements with all nonnegative coefficients when expressed as a sum of elements of $\Omega$.  If $\langle \omega, x \rangle > 0$ then $\omega$ is called an \emph{irreducible constituent} of $x$, and sometimes we say $x$ ``contains'' $\omega$ - this is written as $\omega \leq x$.  Direct sums of T-groups are given the structure of $T$-groups by taking as the irreducible elements the disjoint unions of the irreducible elements of the direct summands.  A T-group is said to be \emph{graded} if it is graded as an $A$-module and if the irreducible elements are homogeneous with respect to that grading.  Tensor products of T-groups are given the structure of T-groups in which the irreducible elements are the tensor products of irreducible elements of the tensor factors.  A \emph{positive map} of T-groups sends positive elements to positive elements. If $M$ and $N$ are $T$-groups and $f \colon M \rightarrow N$, $g \colon N \rightarrow M$ are abelian group morphisms, then we say $f$ and $g$ are \emph{adjoint} (also say $f$ is adjoint to $g$) if $\langle f(m), n \rangle_N = \langle m, g(n) \rangle_M$ for all $m \in M$ and $n \in N$. As usual, we say $f$ is \emph{self-adjoint} is $f$ is adjoint to itself.  We will take $\ints$ to have the canonical T-group structure with distinguished generator 1.

\begin{definition} A \emph{positive self-adjoint Hopf algebra} (PSH-algebra) is a connected Hopf algebra $(H, \mu, \mu^*, e, e^*, T)$ over $\ints$ which is also a graded $T$-group such that the maps $\mu, \mu^*, e, e^*$ are positive and graded, and such that the pairs $(\mu, \mu^*)$ and $(e, e^*)$ are pairs of adjoint maps.\end{definition}

The most fundamental example of a PSH-algebra is the PSH-algebra $R$ mentioned in the introduction constructed from representations of the symmetric group.  Let $R_0 = \ints$ and, for $n > 0$, let $R_n := K_0(\bold{Rep}-S_n)$ be the Grothendieck group of the category of finite-dimensional complex representations of $S_n$, and let $R$ be the graded direct sum: $$R = \bigoplus_{n \geq 0} R_n.$$   Give $R$ the graded T-group structure in which $\Omega(R_0) = \{1\}$ and, for $n > 0$, $\Omega(R_n)$ consists of the isomorphism classes of the irreducible representations of $S_n$.  In view of the natural embedding of $S_k \times S_l$ in $S_{k + l}$ and the identification of irreducible representations of $S_k \times S_l$ with tensor products of irreducible representations of $S_k$ and $S_l$, the multiplication $\mu \colon R \otimes R \rightarrow R$ is defined on positive simple tensors $[\sigma \otimes \tau] \in R_k \times R_l$ by $\mu([\sigma \otimes \tau]) = [\ind_{S_k \times S_l}^{S_{k + l}} \sigma \otimes \tau]$ and extended by linearity to $R \otimes R$.  The comultiplication $\mu^* \colon R \rightarrow R \otimes R$ is defined on $R_n$ similarly by setting $\mu^*([\sigma]) = \sum_{k + l = n} \res_{S_k \times S_l}^{S_n} [\sigma]$ for representations $\sigma$ of $S_n$.  The unit $e \colon \ints \rightarrow R_0 = \ints$ is the identity, and the counit is the graded projection to $R_0$.  That $R$ is a PSH-algebra then encodes several facts about representations - the T-group structure differentiates the classes of honest representations from those of virtual representations, and positivity properties reflect that induction and restriction send representations to representations, self-adjointness reflects Frobenius reciprocity, and the Hopf axiom reflects Mackey's theorem on the composition of induction and restriction (see, for example, \cite[\S32]{Bump}).

$R$ is the fundamental, or ``universal,'' PSH-algebra from which all others can be constructed.  Recall that an element $h$ of a Hopf algebra $H$ is called \emph{primitive} if $\mu^*(h) = h \otimes 1 + 1 \otimes h$.  Suppose $H$ is any PSH-algebra.  Let $I = \bigoplus_{n > 0} H_n$ be the \emph{augmentation ideal} in $H$, and set $I^2 = \mu(I \otimes I)$.  By adjointness, the primitive elements of $H$ then have an alternative description as the elements of the subgroup $P$ of $I$ orthogonal to $I^2$, \cite[Lemma 1.7]{Zel}.  Given any two irreducible elements  $\omega, \omega' \in I$ (so of positive degree), we may ``generate'' additional irreducible elements by considering the irreducible constituents of the product $\omega \omega'$.  The primitive irreducible elements are then precisely the irreducible elements that are not generated in this fashion, and are the minimal collection of irreducibles from which all others can be obtaied.  For instance, for the PSH-algebra associated with the complex representation theory of the finite general linear groups, the primitive irreducible representations (the \emph{cuspidal representations}) are those which do not occur as irreducible constituents of representations obtained by (nontrivial) parabolic induction.  As another example, the class of the trivial representation of $S_1$ is the unique primitive irreducible element of the PSH-algebra $R$.  We are now in a position to recall Zelevinsky's decomposition and classification theorems for PSH-algebras.  The tensor product of PSH-algebras is given the structure of a PSH-algebra in the obvious manner.

\begin{proposition} (Zelevinsky) If $H$ is a PSH-algebra with a single primitive irreducible element $\rho$, then for every $\omega \in \Omega(H)$ there exists $n \geq 0$ such that $\omega \leq \rho^n$, so $\deg(\rho) | \deg(\omega)$.  After a rescaling the grading so that $\deg(\rho) = 1$, and $H$ is isomorphic as a PSH-algebra to the PSH-algebra $R$ introduced above. \end{proposition}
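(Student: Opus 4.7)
The plan is to handle the two claims in order: first, that every irreducible $\omega \in \Omega(H)$ is a constituent of some power $\rho^n$ (which also yields $\deg(\rho) \mid \deg(\omega)$), and second, after rescaling the grading so that $\deg(\rho) = 1$, that $H \cong R$ as PSH-algebras.

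For the first claim, I would induct on $\deg(\omega)$. The base case $\omega = 1 \in H_0$ is $\rho^0$ by connectedness. For the inductive step, if $\omega$ is primitive then the uniqueness hypothesis forces $\omega = \rho$ and we are done. Otherwise $\omega \notin P$, so by the description $P = I \cap (I^2)^\perp$ recalled before the statement there exists some element of $I^2$ not orthogonal to $\omega$; since $I^2$ is spanned by products $\alpha\beta$ of irreducibles $\alpha, \beta \in \Omega(I)$ and positivity of $\mu$ ensures $\langle \omega, \alpha\beta\rangle \geq 0$ always, we may pick irreducibles $\alpha, \beta$ of positive degree with $\omega \leq \alpha\beta$. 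The grading forces $\deg(\alpha), \deg(\beta) < \deg(\omega)$, so by induction $\alpha \leq \rho^a$ and $\beta \leq \rho^b$ for some $a, b \geq 0$. Positivity of $\mu$ then yields $\alpha\beta \leq \rho^{a+b}$, hence $\omega \leq \rho^{a+b}$. Finally, since $\rho^n$ is homogeneous of degree $n\deg(\rho)$, the inequality $\omega \leq \rho^n$ forces $\deg(\omega) = n\deg(\rho)$, establishing the divisibility. Rescaling then reduces us to the case $\deg(\rho) = 1$.

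For the isomorphism $H \cong R$, my strategy is to construct a PSH-algebra morphism $\phi\colon R \to H$ sending the unique primitive irreducible of $R$ (the class $x_1$ of the trivial representation of $S_1$) to $\rho$, and then to show it is bijective on irreducibles in each degree. After tensoring with $\mathbb{Q}$, both $R\otimes\mathbb{Q}$ and $H\otimes\mathbb{Q}$ are graded connected commutative cocommutative Hopf algebras, hence by Cartier--Milnor--Moore are symmetric algebras on their primitive subspaces. Primitive elements $\rho_n \in (H\otimes\mathbb{Q})_n$ with $\rho_1 = \rho$ may be produced inductively by projecting $\rho^n$ onto the primitive part using the explicit formula $\mu^*(\rho^n) = \sum_{k=0}^{n} \binom{n}{k}\rho^k \otimes \rho^{n-k}$ (which follows from the primitivity of $\rho$ together with the Hopf axiom). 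Sending the power sum $p_n \in R$ to $\rho_n$ then determines the desired rational PSH-algebra map.

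The main obstacle, which I expect to require the most care, is to promote this rational construction to an integral isomorphism carrying $\Omega(R)$ bijectively onto $\Omega(H)$. By the first step, $\Omega(H_n)$ consists precisely of the irreducible constituents of $\rho^n$, and the same holds in $R_n$ with $x_1$ in place of $\rho$. The structure constants $\langle \rho^a\rho^b, \omega\rangle$ for $a + b = n$ and $\omega \in \Omega(H_n)$ are determined via the Hopf axiom from the explicit formula above; positivity and self-adjointness then constrain these numbers to match the corresponding branching-rule multiplicities in $R$. A careful induction on $n$, using positivity to exclude fractional or signed combinations, should simultaneously establish the bijection $\Omega(H_n) \leftrightarrow \Omega(R_n)$ and the integrality of $\phi$. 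This simultaneous handling of integrality and the matching of irreducible bases is the most delicate point, and is precisely where the full strength of the PSH axioms is essential.
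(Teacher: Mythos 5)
The paper itself offers no proof: it simply cites Zelevinsky's Theorem 3.1, so your attempt must be judged as a self-contained argument. Your first half is fine. The induction on $\deg(\omega)$ using $P = I \cap (I^2)^{\perp}$, positivity of $\mu$ to extract irreducibles $\alpha,\beta$ of strictly smaller positive degree with $\omega \leq \alpha\beta$, and homogeneity of $\rho^n$ to get $\deg(\rho)\mid\deg(\omega)$ is exactly the standard argument and is complete.

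The second half has a genuine gap, and it sits precisely where the theorem lives. Two specific problems. First, the Cartier--Milnor--Moore step does not by itself pin down $H\otimes\mathbb{Q}$: you need the primitive space of $(H\otimes\mathbb{Q})_n$ to be one-dimensional for every $n$ (equivalently, $|\Omega(H_n)| = p(n)$, the number of partitions), and nothing you have established implies this. Knowing that every irreducible of $H_n$ is a constituent of $\rho^n$ together with $\langle\rho^n,\rho^n\rangle = n!$ (from primitivity of $\rho$ and self-adjointness) only bounds $|\Omega(H_n)|$ by $n!$, not by $p(n)$; projecting $\rho^n$ onto the primitives produces at most one primitive per degree, but does not exclude others. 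Second, and more seriously, the passage from a rational Hopf-algebra isomorphism to an integral isomorphism carrying $\Omega(R)$ onto $\Omega(H)$ is asserted, not proved: the sentence ``positivity and self-adjointness then constrain these numbers to match the corresponding branching-rule multiplicities in $R$'' is exactly the content of the theorem, and your own closing paragraph concedes it is left open. For comparison, Zelevinsky's proof never tensors with $\mathbb{Q}$: it starts from $\langle\rho^2,\rho^2\rangle = 2$ to deduce $\rho^2 = x_2 + y_2$ with $x_2 \neq y_2$ irreducible, inductively constructs the analogues $x_n, y_n$ of the complete and elementary symmetric functions using the adjoint operators $x^*$ and the derivation property of $\rho^*$, and from these identities reads off both the count $|\Omega(H_n)| = p(n)$ and the matching of the distinguished bases simultaneously. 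Some argument of that kind (working integrally with the inner product and positivity throughout) is needed to close your proof; as written, the rational detour leaves the essential difficulty untouched.
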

\begin{proof} This is \cite[Theorem 3.1]{Zel}.\end{proof}

\begin{proposition} (Zelevinsky) Let $H$ be a PSH-algebra and let $\mathcal{C} \neq \emptyset$ be the primitive irreducible elements.  For any $\rho \in \mathcal{C}$, the T-subgroup $H(\rho)$ generated by the irreducible constituents of the $\rho^n$ for $n \geq 0$ is a PSH-subalgebra with the unique primitive irreducible element $\rho$, and the multiplication map $$\mu \colon \bigotimes_{\rho \in \mathcal{C}} H(\rho) \rightarrow H$$ is an isomorphism of PSH-algebras.\end{proposition}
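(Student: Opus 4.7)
The plan is to prove the statement in three main stages: verify that each $H(\rho)$ is a sub-PSH-algebra with unique primitive irreducible $\rho$; establish orthogonality between distinct $H(\rho)$'s; and deduce that the multiplication map is a positive isometry which is also surjective, hence an isomorphism.

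First I would show that $H(\rho)$ is closed under $\mu^*$. By the Hopf axiom and primitivity of $\rho$, one computes $\mu^*(\rho^n) = (\rho \otimes 1 + 1 \otimes \rho)^n \in H(\rho) \otimes H(\rho)$; positivity then forces every irreducible summand of $\mu^*(\omega)$, for an irreducible constituent $\omega \leq \rho^n$, to already appear in $\mu^*(\rho^n)$, and so to lie in $H(\rho) \otimes H(\rho)$. Closure under $\mu$ is immediate from the definition, and closure under the antipode follows because on a graded connected Hopf algebra the antipode is determined by $\mu$ and $\mu^*$. Uniqueness of $\rho$ as primitive irreducible is quick: any other irreducible in $H(\rho)$ is a constituent of $\rho^n$ for some $n \geq 2$, and so pairs nontrivially with the element $\rho^n \in I^2$, violating the characterization of primitives as the orthogonal complement of $I^2$.

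Second I would prove that distinct $H(\rho)$'s are orthogonal. By positivity, this reduces to the statement that $\langle \rho^m, (\rho')^n \rangle = 0$ for $m,n \geq 1$ and $\rho \neq \rho'$. Self-adjointness gives $\langle \rho^m, (\rho')^n \rangle = \langle \rho \otimes \rho^{m-1}, \mu^*((\rho')^n) \rangle$, and expanding $\mu^*((\rho')^n) = \sum_k \binom{n}{k}(\rho')^k \otimes (\rho')^{n-k}$ reduces the computation to $\langle \rho, (\rho')^k \rangle$, which vanishes for $k=0$ by degree, for $k=1$ because $\rho$ and $\rho'$ are distinct irreducibles, and for $k \geq 2$ because $\rho$ is primitive and $(\rho')^k \in I^2$.

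Third I would deduce that the multiplication map $\mu \colon \bigotimes_\rho H(\rho) \rightarrow H$ is an isomorphism of PSH-algebras. Positivity, gradedness, and coalgebra compatibility (from the Hopf axiom and the coalgebra closure of each $H(\rho)$) are straightforward; algebra compatibility depends on the known commutativity of PSH-algebras. The essential step is the isometry, which I would obtain by iterating the adjunction $\langle ab, c \rangle = \langle a \otimes b, \mu^*(c) \rangle$: the iterated coproduct of a product factors through the Hopf axiom across tensor factors, and the orthogonality just proved decouples $\langle \prod x_\rho, \prod y_\rho \rangle$ into $\prod_\rho \langle x_\rho, y_\rho \rangle$. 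A positive isometry of T-groups automatically carries irreducibles to irreducibles (a positive integral element of norm one is a single irreducible), so $\mu$ is injective with image a sub-T-group closed under taking irreducible constituents. Surjectivity then follows by induction on degree: an irreducible $\omega$ is either primitive, hence in some $H(\rho) \subset \mathrm{image}(\mu)$, or fails primitivity and is therefore a constituent of some product $\alpha\beta$ with $\alpha, \beta$ irreducibles of smaller positive degree, whose images lie in $\mathrm{image}(\mu)$ by induction. The main obstacle will be the isometry computation, where careful bookkeeping of how the iterated coproduct distributes across the (possibly infinite, but locally finite in each degree) indexing set $\mathcal{C}$ and the factor-by-factor application of orthogonality is the technical crux of the proof.
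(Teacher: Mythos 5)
Your proof is correct. Bear in mind that the paper gives no argument of its own here---it simply cites \cite[\S 2.2]{Zel}---and what you have written is essentially Zelevinsky's proof: closure of $H(\rho)$ under $\mu^*$ via $\mu^*(\rho^n) = (\rho \otimes 1 + 1 \otimes \rho)^n$ together with positivity; pairwise orthogonality of distinct $H(\rho)$'s by pairing powers of distinct primitives (your Stage 2 computation is exactly the $M = H$, $k = 1$ case of the paper's own Proposition 12); and the resulting positive isometry combined with induction on degree for surjectivity, using that the image is a sub-$T$-group closed under products and under taking irreducible constituents. The one input you invoke without proof is the commutativity and cocommutativity of PSH-algebras, which you need both to make sense of the unordered tensor product $\bigotimes_{\rho \in \mathcal{C}} H(\rho)$ and for the claim that $\mu$ is an algebra morphism; this is a theorem of Zelevinsky rather than an axiom, and since the paper itself uses it freely (e.g.\ whenever Proposition 5 is applied to a PSH-algebra) citing it is consistent with the paper's conventions, though a fully self-contained treatment would either establish it first or fix an ordering of $\mathcal{C}$ and deduce order-independence from your orthogonality computation.
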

\begin{proof} This is \cite[\S2.2]{Zel}.\end{proof}

In fact, $R$ is isomorphic as a PSH-algebra to the Hopf algebra of symmetric functions on countably many indeterminates.  A tremendous amount is known about symmetric functions, which in combination with the above theorems provides a very satisfactory description of PSH-algebras.

\section{The $k$-Hopf axiom and $k$-PSH Modules: Definitions}

This section introduces and establishes the basic properties of the objects of primary interest in this paper, positive self-adjoint $k$-Hopf modules ($k$-PSH modules).  These are a natural notion of module for PSH-algebras which involve the positivity and self-adjointness structure, the only unexpected aspect is the replacement of the Hopf axiom for Hopf algebras with a new compatibility relation, the ``$k$-Hopf'' axiom, in which the Hopf $k^{th}$-power map (iterated comultiplication to the $k^{th}$ tensor power followed by iterated multiplication) of the underlying PSH-algebra enters in an essential way.

Let $(H, \mu, \mu^*, e, e^*, T)$ be a Hopf algebra over the commutative ring $A$.  A \emph{Hopf module over $H$} is an $A$-module $M$ along with $A$-linear maps $\alpha \colon H \otimes M \rightarrow M$ (action), $\alpha^* \colon M \rightarrow H \otimes M$ (coaction) with the axioms of associativity, coassociativity, unit, and counit.  As for Hopf algebras, we will often use juxtaposition to represent the action and sumless Sweedler notation to represent the coaction.  Associativity thus states $a(bm) = (ab)m$ for all $a, b \in H$ and $m \in M$ (equivalently $\alpha \circ (\mu \otimes 1) = \alpha \circ (1 \otimes \alpha)$), while coassociativity states $(\mu^* \otimes 1) \circ \alpha^* = (1 \otimes \alpha^*) \circ \alpha^*$.  The unit axiom states that for $a \in A, m \in M$ we have $e(a)m = am$, while the counit axiom gives $e^*(m_{(1)})m_{(2)} = m$.

We say $M$ is \emph{graded} if $M$ has a grading $M = \bigoplus_{n \geq 0} M_n$ as an $A$-module, if $H$ is a graded Hopf algebra, and if the action and coaction are graded with respect to the grading $(H \otimes M)_n = \bigoplus_{k + l = n} H_k \otimes M_l$.  We say $M$ is \emph{positive} if $A = \ints$ and if both $M$ and $H$ are T-groups so that the action and coaction maps are positive.  If $M$ is both graded and positive we will always assume further that these are compatible in the sense that the irreducible elements are homogeneous.  We say $M$ is \emph{self-adjoint} if $M$ and $H$ are $T$-groups so that the action and coaction maps are mutually adjoint.

For $k > 0$, let $\mu^{(k)} \colon \bigotimes^k H \rightarrow H$ be iteration of multiplication $k$ times - by associativity this is unambiguous.  Similarly, we have $\mu^{*(k)} \colon H \rightarrow \bigotimes^k H$ by iterating comultiplication.  For example, note that $\mu^{(1)} = \mu^{*(1)} = id_H$ and $\mu = \mu^{(2)}$.  The composition $\Psi^k \colon = \mu^{(k)} \circ \mu^{*(k)} \colon H \rightarrow H$ will be called the \emph{Hopf $k^{th}$-power map}.  Note $\Psi^1$ = id$_H$. For the case $k = 0$, set $\mu^{(0)} = e$ and $\mu^{*(0)} = e^*$ and set $\Psi^0 = e \circ e^*$.  We then have:

\begin{proposition} $\Psi^k$ is a morphism of algebras when $H$ is commutative, and a morphism of coalgebras when $H$ is cocommutative. If $H$ is a PSH-algebra, $\Psi^k$ is self-adjoint and a PSH-algebra morphism, i.e. $\Psi^k$ is a self-adjoint, positive, graded Hopf algebra morphism. For $k \geq 1$, $\Psi^k$ commutes with every Hopf algebra endomorphism of $H$.\end{proposition}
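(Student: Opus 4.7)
The plan is to handle the three assertions separately, using the Hopf axiom ($\mu^*$ is an algebra map, equivalently $\mu$ is a coalgebra map), the adjointness, positivity, and gradedness of $\mu, \mu^*$, and the fact that every PSH-algebra is commutative and cocommutative.

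First I would prove the algebra/coalgebra morphism statement. Since $\mu^*$ is always an algebra morphism, an induction on $k$ shows that $\mu^{*(k)} \colon H \to H^{\otimes k}$ is an algebra morphism (the inductive step uses that a tensor product of algebra morphisms is an algebra morphism into the tensor product algebra). When $H$ is commutative, $\mu$ is an algebra morphism, and hence so is $\mu^{(k)}$ by the same inductive argument; composing gives $\Psi^k$ as an algebra morphism. The coalgebra assertion for cocommutative $H$ is the formal dual.

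For the PSH case, I would invoke the fact that every PSH-algebra is commutative and cocommutative, which follows from Zelevinsky's decomposition theorem (Proposition 2 above): every PSH-algebra is a tensor product of copies of $R$ (with possibly rescaled grading), and $R$ is manifestly commutative and cocommutative. Then the previous step makes $\Psi^k$ simultaneously an algebra and coalgebra morphism, hence a bialgebra morphism; since $H$ is a connected graded Hopf algebra its antipode is uniquely determined by the bialgebra structure, so $\Psi^k$ is automatically a Hopf algebra morphism. Positivity and gradedness follow immediately from the corresponding properties of $\mu, \mu^*$. For self-adjointness, I would first show by induction on $k$ that $\mu^{(k)}$ and $\mu^{*(k)}$ are mutually adjoint with respect to the tensor-product inner product on $H^{\otimes k}$ (the inductive step uses that a tensor product of adjoint pairs is adjoint factorwise), and then write
$$\langle \Psi^k x, y \rangle = \langle \mu^{(k)} \mu^{*(k)} x, y \rangle = \langle \mu^{*(k)} x, \mu^{*(k)} y \rangle = \langle x, \mu^{(k)} \mu^{*(k)} y \rangle = \langle x, \Psi^k y \rangle.$$

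For the endomorphism statement, fix a Hopf algebra endomorphism $\phi$ of $H$. The basic intertwining relations $\phi \circ \mu = \mu \circ (\phi \otimes \phi)$ and $(\phi \otimes \phi) \circ \mu^* = \mu^* \circ \phi$ extend by induction on $k$ to $\phi \circ \mu^{(k)} = \mu^{(k)} \circ \phi^{\otimes k}$ and $\phi^{\otimes k} \circ \mu^{*(k)} = \mu^{*(k)} \circ \phi$, and composing yields $\phi \circ \Psi^k = \Psi^k \circ \phi$. The main subtlety is the commutativity/cocommutativity of PSH-algebras invoked in the previous step; without it, $\Psi^k$ need not be a bialgebra morphism at all. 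Invoking Zelevinsky's decomposition disposes of this cleanly, and everything else is routine bookkeeping with the Hopf and adjointness axioms.
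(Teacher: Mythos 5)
Your proof is correct and follows essentially the same route as the paper's: iterate the (co)algebra-morphism property of $\mu$ and $\mu^*$, deduce positivity, gradedness, and self-adjointness of $\Psi^k$ from the corresponding properties of $\mu^{(k)}$ and $\mu^{*(k)}$, and verify the intertwining with Hopf endomorphisms by the same one-line computation $f \circ \mu^{(k)} \circ \mu^{*(k)} = \mu^{(k)} \circ f^{\otimes k} \circ \mu^{*(k)} = \mu^{(k)} \circ \mu^{*(k)} \circ f$. If anything you are more careful than the paper, which says the PSH case ``follows from the positivity and adjointness axioms'' without flagging that the commutativity and cocommutativity of a PSH-algebra must be imported from Zelevinsky's classification, as you explicitly do.
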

\begin{proof} Recall $\Psi^0 = e \circ e^* = \mu \circ (1 \otimes T) \circ \mu^*$ where $T$ is the antipode.  The map $\mu^*$ is an algebra morphism by the Hopf axiom, similarly $\mu$ is a coalgebra morphism, and $T$ is an antihomomorphism of both algebras and coalgebras.  Thus $T$ is a morphism of algebras when $H$ is commutative and a morphism of coalgebras when $H$ is cocommutative.  $\mu$ is a morphism of algebras when $H$ is commutative, and $\mu^*$ is a morphism of coalgebras when $H$ is cocommutative.  Thus $\Psi^k$ is a morphism of algebras when $H$ is commutative and a morphism of coalgebras when $H$ is cocommutative.  The statement in the PSH case follows from the positivity and adjointness axioms.  That $\Psi^k$ commutes with every Hopf algebra endomorphism $f \colon H \rightarrow H$ is simply $f \circ \Psi^k = f \circ \mu^{(k)} \circ \mu^{*(k)} = \mu^{(k)} \circ f^{\otimes k} \circ \mu^{*(k)} = \mu^{(k)} \circ \mu^{*(k)} \circ f = \Psi^k \circ f$.\end{proof}

\begin{proposition} If $H$ is commutative and cocommutative, $\Psi^k \circ \Psi^l = \Psi^{kl}$ and $\mu \circ (\Psi^k \otimes \Psi^l) \circ \mu^* = \Psi^{k + l}$. \end{proposition}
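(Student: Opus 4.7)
The plan is to verify each identity by direct Sweedler-notation computation, leaning on three structural facts: coassociativity of $\mu^*$, associativity of $\mu$, and that $\mu^{*(k)} \colon H \to H^{\otimes k}$ is an algebra morphism with respect to componentwise multiplication on $H^{\otimes k}$ (obtained by iterating the Hopf axiom). Commutativity of $H$ enters only at the end of the first identity, to reorder a monomial.

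For the second identity, I would expand
\[
(\Psi^k \otimes \Psi^l) \circ \mu^*(h) = \mu^{(k)}(\mu^{*(k)}(h_{(1)})) \otimes \mu^{(l)}(\mu^{*(l)}(h_{(2)}))
\]
and factor it as $(\mu^{(k)} \otimes \mu^{(l)}) \circ (\mu^{*(k)} \otimes \mu^{*(l)}) \circ \mu^*(h)$. Coassociativity gives $(\mu^{*(k)} \otimes \mu^{*(l)}) \circ \mu^* = \mu^{*(k+l)}$, and associativity gives $\mu \circ (\mu^{(k)} \otimes \mu^{(l)}) = \mu^{(k+l)}$. Composing yields $\mu^{(k+l)} \circ \mu^{*(k+l)} = \Psi^{k+l}$, as desired.

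For the first identity, I would write $\Psi^l(h) = h_{(1)} h_{(2)} \cdots h_{(l)}$ and then apply $\mu^{*(k)}$. Since $\mu^{*(k)}$ is an algebra morphism,
\[
\mu^{*(k)}(h_{(1)} \cdots h_{(l)}) = \mu^{*(k)}(h_{(1)}) \cdots \mu^{*(k)}(h_{(l)}),
\]
the product being componentwise in $H^{\otimes k}$. By coassociativity, the $kl$ resulting inner Sweedler components together form $\mu^{*(kl)}(h)$ up to relabeling. Applying $\mu^{(k)}$ then multiplies all $kl$ of them in $H$ in a specific order; commutativity lets me rearrange to match $\mu^{(kl)} \circ \mu^{*(kl)}(h) = \Psi^{kl}(h)$.

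The main obstacle is purely combinatorial: keeping track of which of the $kl$ iterated Sweedler components lands in which tensor slot after $\mu^{*(k)}$ is applied to each factor of $\mu^{*(l)}(h)$. Coassociativity collapses this bookkeeping onto a single $\mu^{*(kl)}(h)$, so once the indexing is set up carefully, both identities reduce to unambiguous identifications of iterated (co)multiplications with their $k+l$-fold or $kl$-fold counterparts.
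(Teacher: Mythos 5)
For $k,l\ge 1$ your argument is correct and is essentially the fully worked-out version of what the paper merely asserts: the paper's proof dismisses this case in one clause (``follows immediately from either commutativity or cocommutativity'' for the first identity, ``from associativity and coassociativity'' for the second), and your factorization $\mu\circ(\mu^{(k)}\otimes\mu^{(l)})\circ(\mu^{*(k)}\otimes\mu^{*(l)})\circ\mu^*=\mu^{(k+l)}\circ\mu^{*(k+l)}$ and your row-versus-column reordering of the $kl$ Sweedler components (using commutativity only at the last step) are exactly the intended details. Your observation that $\mu^{*(k)}$ is an algebra morphism for the componentwise product on $H^{\otimes k}$ without any (co)commutativity hypothesis is also right.

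The gap is that the proposition is stated for all $k,l\ge 0$, and $\Psi^0=e\circ e^*$ is defined by a separate formula, not as an empty product; your expansion $\Psi^l(h)=h_{(1)}\cdots h_{(l)}$ and the identity $(\mu^{*(k)}\otimes\mu^{*(l)})\circ\mu^*=\mu^{*(k+l)}$ (as an instance of coassociativity) have no meaning when $k=0$ or $l=0$. These boundary cases are where the paper's proof actually does its work: it notes that $\Psi^k\circ\Psi^0=\Psi^0$ because every unital algebra endomorphism fixes $e\circ e^*$ pointwise, that $\Psi^0\circ\Psi^k=\Psi^k\circ\Psi^0$ by the previous proposition, and for the second identity it reduces by symmetry to $k=0$, where $\mu\circ(\Psi^0\otimes\Psi^l)\circ\mu^*=\mu\circ(\Psi^0\otimes 1)\circ\mu^*\circ\Psi^l=\Psi^l$ using that $\Psi^l$ is a coalgebra morphism and the counit axiom. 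Your checks at these boundaries would be equally easy (replace ``coassociativity'' by the counit axiom, and verify $e^*\circ\Psi^l=e^*$ directly), but as written they are missing, and they cannot be absorbed into the combinatorial bookkeeping you describe.
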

\begin{proof} By the previous proposition, $\Psi^k$ is a Hopf algebra morphism.  For $k, l > 0$, the identity $\Psi^k \circ \Psi^l = \Psi^{kl}$ follows immediately from either commutativity or cocommutativity and the second identity follows from associativity and coassociativity.  The previous proposition gives $\Psi^k \circ \Psi^0 = \Psi^0 \circ \Psi^k$, and $\Psi^k \circ \Psi^0 = \Psi^0$ because for any algebra morphism $f \colon H \rightarrow H$ we have $f(\Psi^0(x)) = f(e(e^*(x))) = f(e^*(x)1) = e^*(x)f(1) = \Psi^0(x)$.  By commutativity both sides of the second identity are symmetric in $k$ and $l$, so we need only treat $k = 0$.  The counit axiom gives $\mu \circ (\Psi^0 \otimes 1) \circ \mu^* = \id$, so we have $\mu \circ (\Psi^0 \otimes \Psi^l) \circ \mu^* = \mu \circ ((\Psi^0 \circ \Psi^l) \otimes \Psi^l) \circ \mu^* = \mu \circ (\Psi^0 \otimes 1) \circ \mu^* \circ \Psi^l = \Psi^l.$\end{proof}

Let M be a Hopf module over $H$, and let $\tau \colon H \otimes H \rightarrow H$ denote the $A$-linear transposition map $\tau(x \otimes y) = y \otimes x$.  Let $k \geq 0$ be an integer.  We say that $M$ is a $k$-\emph{Hopf module} if it satisfies the \emph{$k$-Hopf axiom}, meaning that the following diagram commutes: 
\begin{diagram}
H \otimes M & \rTo^{\mu^* \otimes \alpha^*} &H \otimes H \otimes H \otimes M&\rTo^{\Psi^k \otimes \tau \otimes 1}&H \otimes H \otimes H \otimes M\\
\dTo^{\alpha}&&&&\dTo^{\mu \otimes \alpha}\\
M&&\rTo^{\alpha^*}&&H \otimes M\end{diagram}  In sumless Sweedler notation, this means for all $h \in H$ and $m \in M$ we have $\alpha^*(hm) = \Psi^k(h_{(1)})m_{(1)} \otimes h_{(2)}m_{(2)}$.  Observe that if $k = 1$, $M = H$, $\alpha = \mu$, and $\alpha^* = \mu^*$ then the $k$-Hopf axiom is precisely the Hopf axiom.

\begin{definition} If $H$ is a PSH-algebra, a \emph{$k$-PSH module over $H$} is a graded, positive, self-adjoint, $k$-Hopf module over $H$.\end{definition}

\section{Primitive Elements and Constructions}

Throughout this section, let $H$ be a PSH-algebra. For a Hopf module $(M, \alpha, \alpha^*)$ over $H$, we say an element $m \in M$ is \emph{module primitive} (or just \emph{primitive} if the context is clear) if $\alpha^*(m) = 1 \otimes m$, in contrast with the definition for Hopf algebras.  The primitive elements then form a subgroup, which will be denoted $Q$.  If $I = \bigoplus_{n > 0} H_n$ is the augmentation ideal as in Section 2, the grading and connectivity of $H$ along with the grading, unit, and counit for $M$ imply that for any $m \in M$, $\alpha^*(m) = 1 \otimes m + \alpha^*_+(m)$ for $\alpha^*_+(m) \in I \otimes M$.  Thus, $m$ is primitive $\iff$ $\alpha^*_+(m) = 0$.  This definition of primitivity is then justified by the following proposition:

\begin{proposition} Let $H$ be a PSH-algebra and let $M$ be $k$-PSH over $H$.  Let $IM \subset M$ be the submodule $\alpha(I \otimes M)$.  Then $Q$ is the orthogonal complement of $IM$.\end{proposition}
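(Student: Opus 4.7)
The plan is to argue directly by adjointness: since $M$ is self-adjoint, $\alpha$ and $\alpha^*$ are adjoint, so for any $m \in M$, we have $m \perp IM$ if and only if $\langle m, \alpha(h \otimes m') \rangle = 0$ for all $h \in I$ and $m' \in M$, which by adjointness is equivalent to $\langle \alpha^*(m), h \otimes m' \rangle = 0$ for all such $h, m'$. So the statement reduces to comparing this condition with $\alpha^*(m) = 1 \otimes m$.

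To do this, I would invoke the decomposition noted just before the statement: by grading and connectedness, every $\alpha^*(m)$ can be written uniquely as $1 \otimes m + \alpha^*_+(m)$ with $\alpha^*_+(m) \in I \otimes M$. Then
\[
\langle \alpha^*(m), h \otimes m' \rangle = \langle 1, h \rangle \langle m, m' \rangle + \langle \alpha^*_+(m), h \otimes m' \rangle.
\]
Since $h \in I$ lies in strictly positive degree while $1 \in H_0$, connectedness gives $\langle 1, h \rangle = 0$, so the first term vanishes. Hence $m \perp IM$ if and only if $\langle \alpha^*_+(m), h \otimes m' \rangle = 0$ for every $h \in I$ and $m' \in M$.

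Finally, the pairing on the T-group $I \otimes M$ is non-degenerate, with orthonormal basis given by tensor products of irreducible elements of $I$ and of $M$. Testing against all simple tensors $h \otimes m'$ with $h \in I, m' \in M$ therefore detects whether $\alpha^*_+(m) = 0$. Combined with the description $m \in Q \iff \alpha^*_+(m) = 0$ recalled just before the proposition, this identifies $Q$ with the orthogonal complement of $IM$.

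There is essentially no obstacle here: the argument uses only the graded, positive, self-adjoint Hopf module structure together with connectedness of $H$, and does not invoke the $k$-Hopf axiom at all. The only point requiring a sentence of care is the observation that $\langle 1, h \rangle = 0$ for $h \in I$, which is immediate from the compatibility of the grading with the T-group structure on $H$.
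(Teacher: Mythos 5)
Your argument is correct and is essentially the paper's own proof: both use adjointness of $\alpha$ and $\alpha^*$ together with the decomposition $\alpha^*(m) = 1 \otimes m + \alpha^*_+(m)$ to reduce the orthogonality condition $m \perp IM$ to $\langle \alpha^*_+(m), x\rangle = 0$ for all $x \in I \otimes M$, and then conclude via non-degeneracy of the pairing on $I \otimes M$. You have merely written out explicitly the steps the paper compresses into one sentence.
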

\begin{proof} The expression $\alpha^*(m) = 1 \otimes m + \alpha^*_+(m)$ along with the self-adjointness and grading gives $\langle m, \alpha(x) \rangle = \langle \alpha^*_+(m), x \rangle$ for $m \in M$ and $x \in I \otimes M$, from which the proposition follows.\end{proof}

Thus, the primitive irreducible elements of a $k$-PSH module are those that cannot be ``generated'' as an irreducible constituent of a nontrivial product of other irreducibles.  Using the previous proposition, we obtain:

\begin{proposition} Associativity and coassociativity follow from the other axioms defining $k$-PSH modules.\end{proposition}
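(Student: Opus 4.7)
The plan is to use self-adjointness to reduce the two desired identities to one, and then prove the remaining identity by induction on degree, with the preceding proposition supplying the inductive reduction.

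First I would observe that associativity, $\alpha \circ (\mu \otimes 1) = \alpha \circ (1 \otimes \alpha)$, and coassociativity, $(\mu^* \otimes 1) \circ \alpha^* = (1 \otimes \alpha^*) \circ \alpha^*$, are mutually adjoint as maps between $H \otimes H \otimes M$ and $M$, using that $\mu$ is adjoint to $\mu^*$ and $\alpha$ to $\alpha^*$. Because the T-group inner products are positive definite, a morphism is determined by its adjoint, so associativity and coassociativity are equivalent, and it suffices to prove only coassociativity.

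Next I would induct on degree. The preceding proposition gives $Q = (IM)^\perp$, and since distinct degrees are orthogonal, within each finite-dimensional graded piece $(M_n)_\mathbb{Q}$ positive-definiteness yields an orthogonal decomposition into the subspaces of primitive elements and of $(IM \cap M_n)_\mathbb{Q}$. Hence every element of $M_n$ is, after clearing an integer denominator (permissible because $M$ is torsion-free), a sum of a primitive element and of terms $\alpha(h \otimes m')$ with $\deg(h) > 0$ and $\deg(m') < n$. The base case $n = 0$ is automatic, since $IM$ lives in positive degrees so $M_0 \subset Q$, and for primitive $q$ the identity $\alpha^*(q) = 1 \otimes q$ produces $1 \otimes 1 \otimes q$ under both $(\mu^* \otimes 1)\alpha^*$ and $(1 \otimes \alpha^*)\alpha^*$.

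For the inductive step, take $m = hm'$ with $\deg(m') < n$. The $k$-Hopf axiom expresses $\alpha^*(hm') = \Psi^k(h_{(1)}) m'_{(1)} \otimes h_{(2)} m'_{(2)}$. Computing $(1 \otimes \alpha^*)\alpha^*(hm')$ applies the $k$-Hopf axiom once more to $h_{(2)}m'_{(2)}$; computing $(\mu^* \otimes 1)\alpha^*(hm')$ uses the Hopf axiom on $H$ to distribute $\mu^*$ across the product $\Psi^k(h_{(1)}) m'_{(1)}$, combined with the earlier fact that $\Psi^k$ is a coalgebra morphism, so that $\mu^* \circ \Psi^k = (\Psi^k \otimes \Psi^k) \circ \mu^*$. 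After using coassociativity of $\mu^*$ on $H$ to align the three resulting $h$-Sweedler factors, both expressions reduce to componentwise multiplication by $\Psi^k(h_{(1)}) \otimes \Psi^k(h_{(2)}) \otimes h_{(3)}$ applied respectively to $(1 \otimes \alpha^*)\alpha^*(m')$ and $(\mu^* \otimes 1)\alpha^*(m')$; these agree by the inductive hypothesis. The main obstacle is the Sweedler bookkeeping here: two applications of the $k$-Hopf axiom must be reconciled against the Hopf and coassociativity axioms on $H$ together with the coalgebra-morphism property of $\Psi^k$, which is the essential algebraic input drawn from the earlier proposition.
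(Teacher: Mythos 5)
Your argument is essentially correct, but it is organized dually to the paper's, and one step needs a finiteness hypothesis the paper's arrangement avoids. The paper uses adjointness to reduce \emph{coassociativity to associativity}, and then proves associativity by showing that the associator $x(ym)-(xy)m$ lies in $IM$ while also being primitive (a Sweedler computation using the $k$-Hopf axiom, the Hopf axiom on $H$, and the fact that $\Psi^k$ is an \emph{algebra} morphism), so that it vanishes because $Q=(IM)^\perp$ and the form is positive definite --- this needs only $IM\cap Q=0$. You go the other way: reduce associativity to coassociativity and verify coassociativity on generators, splitting each homogeneous element into a primitive part plus elements of the form $\alpha(h\otimes m')$ with $h\in I$; your computation for such elements (two applications of the $k$-Hopf axiom reconciled against coassociativity of $\mu^*$, the Hopf axiom on $H$, and the \emph{coalgebra}-morphism property of $\Psi^k$) is the mirror image of the paper's and does go through, with the same induction on degree. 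The caveat is your decomposition step: Proposition 7 gives only $Q=(IM)^\perp$, and to conclude $M_n\otimes\mathbb{Q}=(Q\cap M_n)_{\mathbb{Q}}\oplus(IM\cap M_n)_{\mathbb{Q}}$ you need the orthogonal projection onto $(IM\cap M_n)_{\mathbb{Q}}$ to exist, e.g.\ that each $\Omega(M)\cap M_n$ is finite. This holds in every example in the paper (and in Zelevinsky's setting) but is not part of the stated definitions, whereas the paper's version of the induction never needs to write a general element as ``primitive plus decomposable.'' If you want to avoid the extra hypothesis, apply your same Sweedler computation to show the associator is primitive, exactly as the paper does, rather than decomposing an arbitrary element.
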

\begin{proof} By self-adjointness, coassociativity follows from associativity, for then we have $\langle x \otimes y \otimes m, (\alpha^* \otimes 1)\alpha^*n\rangle = \langle (xy)m, n \rangle = \langle x(ym), n \rangle = \langle x \otimes y \otimes m, (1 \otimes \alpha^*)\alpha^*n\rangle$ for all $x, y \in H$ and $m, n \in M$.  For associativity, it suffices to show $x(ym) - (xy)m = 0$ for $x \in H_a$, $y \in H_b$, and $m \in M_c$ by linearity.  By the unit axiom certainly this is true if either $a = 0$ or $b = 0$, so we may do the proof by induction on $a + b + c$ and suppose $a + b > 0$.  Then $x(ym) - (xy)m \in IM$, so by the preceding proposition it suffices to show $x(ym) - (xy)m$ is primitive.  By the $k$-Hopf axiom for $M$, the Hopf axiom for $H$ and that $\Psi^k$ is a morphism of algebras, we have $$\begin{array} {lcl} &&\alpha^*(x(ym) - (xy)m) \\&= & \Psi^k(x_{(1)})(ym)_{(1)} \otimes x_{(2)}(ym)_{(2)} - \Psi^k((xy)_{(1)})m_{(1)} \otimes (xy)_{(2)}m_{(2)}\\&=&\Psi^k(x_{(1)}y_{(1)})m_{(1)} \otimes x_{(2)}(y_{(2)}m_{(2)}) - \Psi^k(x_{(1)}y_{(1)})m_{(1)}\otimes (x_{(2)}y_{(2)})m_{(2)}.\end{array}$$  The first tensor factors in each term of the sum agree, so and the second tensor factors agree by the inductive hypothesis when the degree is less than $a + b + c$, and these terms cancel.  By the counit axiom and the fact that $\Psi^k(1) = 1$ the remaining terms give $1 \otimes (x(ym) - (xy)m)$, so indeed $x(ym) - (xy)m$ is primitive.\end{proof}

Next we will discuss constructions involving $k$-Hopf modules.  These constructions will give a convenient language to describe the structure of the $k$-PSH modules associated with finite wreath products. Let $H$ be a commutative, cocommutative  Hopf algebra over the commutative ring $A$, and let $(M, \alpha, \alpha^*), (N, \beta, \beta^*)$ be $k$-Hopf and $l$-Hopf modules (respectively) over $H$.  Define the $A$-linear maps $\gamma \colon H \otimes M \otimes N \rightarrow M \otimes N$ and $\gamma^* \colon M \otimes N \rightarrow H \otimes M \otimes N$ by the formulas $\gamma(h \otimes m \otimes n) = h_{(1)}m \otimes h_{(2)}n$ and $\gamma^*(m \otimes n) = m_{(1)}n_{(1)} \otimes m_{(2)} \otimes n_{(2)}$.  Then we have:

\begin{proposition} $(M \otimes N, \gamma, \gamma^*)$ is a $(k + l)$-Hopf module over $H$.  If all objects are of the PSH type, then $M \otimes N$ is a $(k + l)$-PSH module. The usual isomorphisms $M \otimes N \cong N \otimes M$ and $(M \otimes N) \otimes P \cong M \otimes (N \otimes P)$ respect all the various structures.\end{proposition}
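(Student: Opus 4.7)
The plan is to verify, in order: (i) the ordinary Hopf module axioms for $(\gamma,\gamma^{*})$; (ii) the $(k+l)$-Hopf axiom, which is the only genuinely new content; (iii) the PSH-type properties (grading, positivity, self-adjointness) in the PSH case; and (iv) that the standard symmetry and associativity isomorphisms on tensor products of modules respect all of this structure.

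For the basic Hopf module axioms, the unit axiom for $\gamma$ reduces to $\mu^{*}(1)=1\otimes 1$, which holds because $\mu^{*}$ is an algebra morphism, and the counit axiom for $\gamma^{*}$ reduces to the multiplicativity of $e^{*}$ together with the counit axioms for $\alpha^{*}$ and $\beta^{*}$. Associativity of $\gamma$ and coassociativity of $\gamma^{*}$ are standard sumless Sweedler calculations that use coassociativity of $\mu^{*}$, associativity of $\mu$, the module axioms for $M$ and $N$, and cocommutativity (resp.\ commutativity) of $H$ to permute Sweedler indices. In the PSH case Proposition~6 gives associativity and coassociativity for free once the remaining axioms are established.

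The main step is the $(k+l)$-Hopf axiom, and I expect this to be the only real obstacle. I would expand
$$\gamma^{*}(h\cdot(m\otimes n)) = \gamma^{*}(h_{(1)} m \otimes h_{(2)} n)$$
by applying the $k$-Hopf axiom for $M$ to $\alpha^{*}(h_{(1)} m)$ and the $l$-Hopf axiom for $N$ to $\beta^{*}(h_{(2)} n)$, then use commutativity of $H$ to move both $\Psi$-factors into the first tensor slot. Writing $\mu^{*(4)}(h) = h_{[1]}\otimes h_{[2]}\otimes h_{[3]}\otimes h_{[4]}$, the left-hand side becomes
$$\Psi^{k}(h_{[1]})\,\Psi^{l}(h_{[3]})\,m_{(1)}n_{(1)} \;\otimes\; h_{[2]} m_{(2)} \;\otimes\; h_{[4]} n_{(2)}.$$
On the other side, the desired expression $\Psi^{k+l}(h_{(1)})\,(m\otimes n)_{(1)} \otimes h_{(2)}\cdot(m\otimes n)_{(2)}$ unfolds, using the identity $\Psi^{k+l} = \mu\circ(\Psi^{k}\otimes\Psi^{l})\circ\mu^{*}$ from Proposition~5, to
$$\Psi^{k}(h_{[1]})\,\Psi^{l}(h_{[2]})\,m_{(1)}n_{(1)} \;\otimes\; h_{[3]} m_{(2)} \;\otimes\; h_{[4]} n_{(2)}.$$
The two expressions differ precisely by the transposition of $h_{[2]}$ and $h_{[3]}$, and this is supplied by cocommutativity of $H$ applied to the middle two tensor slots of $\mu^{*(4)}$. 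This is the nontrivial input and explains why the construction requires $H$ to be both commutative and cocommutative.

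For the PSH statement, positivity of $\gamma$ and $\gamma^{*}$ follows because they are compositions of positive maps ($\mu$, $\mu^{*}$, $\alpha$, $\beta$, $\alpha^{*}$, $\beta^{*}$), the grading is automatic from the grading on each factor, and self-adjointness of $(\gamma,\gamma^{*})$ reduces to a short pairing computation: $\langle h\otimes m\otimes n,\,m'_{(1)}n'_{(1)}\otimes m'_{(2)}\otimes n'_{(2)}\rangle$ splits via $\langle\mu^{*}\cdot,\cdot\rangle = \langle\cdot,\mu\cdot\rangle$ and the self-adjointness pairs for $M$ and $N$ into $\langle h_{(1)}m,m'\rangle\langle h_{(2)}n,n'\rangle$, which is the pairing of $\gamma(h\otimes m\otimes n)$ against $m'\otimes n'$. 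Finally, the symmetry isomorphism $M\otimes N\cong N\otimes M$ respects $(\gamma,\gamma^{*})$ because commutativity of $H$ makes the defining formulas for $\gamma$ and $\gamma^{*}$ symmetric in the two factors, and the associator isomorphism is checked analogously using associativity of $\mu$ and coassociativity of $\mu^{*}$.
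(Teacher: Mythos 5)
Your proposal is correct and follows essentially the same route as the paper: the only substantive point, the $(k+l)$-Hopf axiom, is reduced to the identity $\Psi^{k+l}=\mu\circ(\Psi^{k}\otimes\Psi^{l})\circ\mu^{*}$ from Proposition~5 together with the $k$- and $l$-Hopf axioms for the factors, with commutativity and cocommutativity used to permute Sweedler factors, and the PSH properties follow from expressing $\gamma$ and $\gamma^{*}$ as compositions of positive, mutually adjoint maps. You usefully make explicit the transposition of the middle Sweedler factors where cocommutativity enters, which the paper's terse proof leaves implicit.
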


\begin{proof}  It is immediate that $\gamma$ and $\gamma^*$ are graded and that the unit and counit axioms hold.  Associativity follows from associativity of $M$ and $N$ and the Hopf axiom for $H$, and coassociativity follows similarly.  We need only check the $(k + l)$-compatibility axiom.  Recall that this amounts to checking the equality $$\gamma^* \circ \gamma = (\mu \otimes \gamma) \circ (\Psi^{k + l} \otimes \tau \otimes 1) \circ (\mu^* \otimes \gamma^*).$$  This follows from the second identity in Proposition 5 and the respective axioms for $M$ and $N$.  The statement in the PSH case is clear when we write $$\gamma = (\alpha \otimes \beta) \circ (1 \otimes \tau \otimes 1) \circ (\mu^* \otimes 1 \otimes 1)$$ $$\gamma^* = (\mu \otimes 1 \otimes 1) \circ (1 \otimes \tau \otimes 1) \circ (\alpha^* \otimes \beta^*),$$ from which positivity and self-adjointness readily follow.  The final statement is obvious.\end{proof}

Let $K$ also be a (graded) commutative, cocommutative Hopf algebra over the commutative ring $A$ with multiplication $\nu$ and comultiplication $\nu^*$, and let $\delta \colon K \rightarrow H$ and $\delta^* \colon H \rightarrow K$ be (graded) Hopf algebra morphisms such that $\delta^* \circ \delta = \Psi^l \colon K \rightarrow K$ for some $l \geq 0$.  Then for a $k$-Hopf module $(M, \alpha, \alpha^*)$ over $H$ there are the $A$-linear maps $\alpha_K  = \alpha \circ (\delta \otimes 1) \colon K \otimes M \rightarrow M$ and $\alpha_K^* = (\delta^* \otimes 1) \circ \alpha^* \colon M \rightarrow K \otimes M$.  Then we have:

\begin{proposition} $(M, \alpha_K, \alpha_K^*)$ is a Hopf module over $K$ with the $kl$-Hopf property.  If all objects are of the PSH type and the maps $\delta, \delta^*$ are mutually adjoint PSH-algebra morphisms (i.e. positive, mutually adjoint Hopf algebra morphisms), then $(M, \alpha_K, \alpha_K^*)$ is a $kl$-PSH module over $K$.\end{proposition}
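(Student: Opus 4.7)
My plan has three stages: verify the ordinary Hopf module axioms for $(M, \alpha_K, \alpha_K^*)$ over $K$; verify the $kl$-Hopf axiom; and, in the PSH case, verify positivity and self-adjointness. The first and third are bookkeeping; the $kl$-Hopf axiom is the heart of the matter.

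For associativity I would compute $\alpha_K(a \otimes \alpha_K(b \otimes m)) = \delta(a)(\delta(b) m) = (\delta(a) \delta(b)) m = \delta(ab) m = \alpha_K(ab \otimes m)$, using associativity of $\alpha$ and that $\delta$ is an algebra morphism. Coassociativity is dual, using that $\delta^*$ is a coalgebra morphism. The unit and counit axioms are immediate because $\delta$ and $\delta^*$ respect units and counits.

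The main step is the $kl$-Hopf axiom, which I would establish by a Sweedler-notation computation starting from $\alpha_K^*(am) = (\delta^* \otimes 1) \alpha^*(\delta(a) m)$, applying the $k$-Hopf axiom for $M$ over $H$ to $\alpha^*(\delta(a) m)$, and simplifying. Three inputs from the preceding material are needed: (i) $\delta$ is a coalgebra morphism, so $\mu^*_H(\delta(a)) = (\delta \otimes \delta)(\nu^*(a))$, letting me replace $\delta(a)_{(1)} \otimes \delta(a)_{(2)}$ by $\delta(a_{(1)}) \otimes \delta(a_{(2)})$; (ii) $\Psi^k$ commutes with any Hopf algebra morphism --- the proof of Proposition 4 applies verbatim to morphisms between distinct Hopf algebras --- so $\Psi^k_H(\delta(a_{(1)})) = \delta(\Psi^k_K(a_{(1)}))$; and (iii) $\delta^*$ is an algebra morphism, which lets me distribute it over the product, after which the hypothesis $\delta^* \circ \delta = \Psi^l_K$ combined with $\Psi^l \circ \Psi^k = \Psi^{kl}$ from Proposition 5 produces $\Psi^{kl}_K$. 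The end result is $\alpha_K^*(am) = \Psi^{kl}_K(a_{(1)}) \delta^*(m_{(1)}) \otimes \delta(a_{(2)}) m_{(2)}$, and since the $K$-action of $a_{(2)}$ on $m_{(2)}$ is $\delta(a_{(2)}) m_{(2)}$ and the $K$-coaction of $m$ is $\delta^*(m_{(1)}) \otimes m_{(2)}$, this is exactly the $kl$-Hopf axiom over $K$.

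For the PSH case, positivity of $\alpha_K$ and $\alpha_K^*$ follows because each is a composition of positive maps. Self-adjointness is a one-line chain: $\langle \alpha_K(a \otimes m), n \rangle = \langle \delta(a) m, n \rangle = \langle a \otimes m, (\delta^* \otimes 1) \alpha^*(n) \rangle = \langle a \otimes m, \alpha_K^*(n) \rangle$, using adjointness of $(\delta, \delta^*)$ and of $(\alpha, \alpha^*)$. The main obstacle is therefore confined to the $kl$-Hopf calculation, where keeping straight which $\Psi$ lives in which Hopf algebra and invoking Proposition 5 at just the right moment is the delicate point.
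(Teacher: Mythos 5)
Your proposal is correct and follows the same route as the paper: the paper's proof is a one-line appeal to the first identity of Proposition 5 ($\Psi^k\circ\Psi^l=\Psi^{kl}$) plus the symmetry of the formulas for $\alpha_K$ and $\alpha_K^*$, and your Sweedler computation --- using that $\delta$ is a coalgebra morphism, that $\Psi^k$ intertwines Hopf morphisms, that $\delta^*$ is an algebra morphism, and finally $\delta^*\circ\delta=\Psi^l$ --- is exactly the intended expansion of that sketch. The remaining verifications (unit, counit, associativity, positivity, adjointness) are handled as the paper indicates.
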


\begin{proof} The $kl$-Hopf axiom follows from the first identity in Proposition 5, and the remaining axioms are immediate.  In the PSH-case, the grading, positivity, and adjointness follow from corresponding assumptions and the symmetry of the formulas defining $\alpha_K$ and $\alpha_K^*$.\end{proof}

\begin{proposition} The direct sum of $k$-Hopf modules (or $k$-PSH modules) is again a $k$-Hopf module ($k$-PSH module).\end{proposition}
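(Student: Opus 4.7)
The plan is to check that all structure on $M \oplus N$ is induced coordinatewise from the structures on $M$ and $N$, and that each of the defining axioms of a $k$-Hopf module (resp.\ $k$-PSH module) decomposes correspondingly.

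First I would set up the data. Given $k$-Hopf modules $(M,\alpha_M,\alpha_M^*)$ and $(N,\alpha_N,\alpha_N^*)$ over $H$, I use the canonical isomorphism $H \otimes (M \oplus N) \cong (H \otimes M) \oplus (H \otimes N)$ to define the action $\alpha \colon H \otimes (M \oplus N) \to M \oplus N$ as $\alpha_M \oplus \alpha_N$ and the coaction $\alpha^* \colon M \oplus N \to H \otimes (M \oplus N)$ as $\alpha_M^* \oplus \alpha_N^*$. By construction, $\alpha$ and $\alpha^*$ preserve the direct-sum decomposition in each argument.

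Next I would verify the axioms. Associativity, coassociativity, the unit and counit axioms, and the $k$-Hopf axiom are all assertions that certain $\ints$-linear diagrams built from $\alpha, \alpha^*, \mu, \mu^*, e, e^*, \Psi^k$ commute. Applying the functor $H^{\otimes p} \otimes (-)$ to $M \oplus N$ yields a direct sum of $H^{\otimes p} \otimes M$ and $H^{\otimes p} \otimes N$ (for any $p$), and by construction the maps in each diagram restrict to the corresponding maps for $M$ and for $N$ on the respective summands. Hence each diagram for $M \oplus N$ is the direct sum of the analogous diagrams for $M$ and $N$, both of which commute by hypothesis; so the diagram for $M \oplus N$ commutes. (Alternatively, by Proposition 7 associativity and coassociativity may be omitted and only the unit, counit, and $k$-Hopf axioms need be verified, but the argument is uniform.)

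Finally I would address the PSH case. Recall from the review section that the direct sum of T-groups is given the T-group structure whose irreducible set is the disjoint union $\Omega(M) \sqcup \Omega(N)$, and the induced inner product on $M \oplus N$ is the orthogonal direct sum of the inner products on $M$ and $N$. Gradings are summed in the obvious way, and the compatibility of irreducibility with grading is inherited summand-by-summand. Since $\alpha$ and $\alpha^*$ act blockwise, positivity and gradedness follow immediately from the same properties of $\alpha_M,\alpha_N,\alpha_M^*,\alpha_N^*$; self-adjointness follows because for $h \in H$, $x \in M \oplus N$ the pairing $\langle \alpha(h \otimes x), y\rangle$ splits as a sum of pairings on $M$ and $N$ separately, each of which equals the corresponding pairing $\langle h \otimes x, \alpha^*(y)\rangle$ by the self-adjointness of the summands. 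There is no real obstacle here; the only thing worth being careful about is that the isomorphism $H \otimes (M \oplus N) \cong (H \otimes M) \oplus (H \otimes N)$ respects not just the $\ints$-module structure but also the grading and T-group structure, which is immediate from the definition of the T-group structure on a tensor product and on a direct sum.
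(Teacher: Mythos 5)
Your proposal is correct and is exactly the coordinatewise verification that the paper dismisses with ``Clear'': the action and coaction on $M \oplus N$ are defined blockwise via $H^{\otimes p} \otimes (M \oplus N) \cong (H^{\otimes p}\otimes M)\oplus(H^{\otimes p}\otimes N)$, so every defining diagram and the positivity, grading, and self-adjointness conditions decompose into the corresponding statements for the summands. No further comment is needed.
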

\begin{proof} Clear.\end{proof}

\section{Decomposition of $k$-PSH Modules} In this section we establish a direct sum decomposition for $k$-PSH modules into summands with exactly one module-primitive irreducible element, analogous to van Leeuwen's direct sum decomposition for twisted PSH-modules.  Throughout this section let $(H, \mu, \mu^*)$ be a PSH-algebra (not necessarily with unique primitive irreducible element) and let $(M, \alpha, \alpha^*)$ be a $k$-PSH module over $H$.  In \cite{Zel} Zelevinsky considered the linear maps $x^* \colon H \rightarrow H$ adjoint to left-multiplication by $x \in H$.  These maps were central in his study of PSH-algebras.  Following this approach, we introduce analogous maps for $M$.

\begin{proposition} For $x \in H$ there exists a unique linear map $\widetilde{x} \colon M \rightarrow M$ adjoint to the left-multiplication map $M \rightarrow M$, $m \mapsto xm$, and for $m \in M$ there exists a unique linear map $\widetilde{m} \colon M \rightarrow H$ adjoint to right-muliplication by $m$.  $\widetilde{x}$ is given by the composition $$\begin{diagram}M&\rTo^{\alpha^*}&H \otimes M&\rTo^{\langle x, \cdot \rangle \otimes 1}&\ints \otimes M \cong M\end{diagram}$$ and $\widetilde{m}$ is given by $$\begin{diagram} M&\rTo^{\alpha^*}&H \otimes M&\rTo^{1 \otimes \langle m, \cdot \rangle}&H \otimes \ints \cong H.\end{diagram}$$ These maps satisfy (setting $H_n = M_n = 0$ for $n < 0$): $$(1)\ x \in H_p \implies \widetilde{x}(M_q) \subset M_{q - p}, \ \ \ m \in M_p \implies \widetilde{m}(M_q) \subset M_{q - p}$$ $$(2)\ x, y \in H \implies \widetilde{x} \circ \widetilde{y} = \widetilde{yx} = \widetilde{xy} = \widetilde{y} \circ \widetilde{x}$$ $$x \in H, m \in M \implies \widetilde{xm} = x^* \circ \widetilde{m} = \widetilde{m} \circ \widetilde{x}$$ $$(3)\ \widetilde{x}(ym) = [\Psi^k(x_{(1)})]^*(y)\widetilde{x_{(2)}}(m)$$ $$(4)\ \widetilde{m}(xn) = \Psi^k[m_{(1)}^*(x)]\widetilde{m_{(2)}}(n)$$ $$(5)\ \widetilde{m}(xn) = \Psi^k(x_{(1)})\widetilde{\widetilde{x_{(2)}}(m)}(n).$$\end{proposition}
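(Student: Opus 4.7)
The plan is to treat the five assertions in turn after first establishing existence, uniqueness, and the explicit formulas for $\widetilde{x}$ and $\widetilde{m}$. For existence and uniqueness, I would appeal to the general fact that any linear map between $T$-groups has a unique adjoint (determined by pairings on orthonormal basis elements). To verify the stated composite formulas, I would compute directly using mutual adjointness of $(\alpha, \alpha^*)$: e.g.
$$\langle xm, n\rangle = \langle x \otimes m,\, \alpha^*(n)\rangle = \langle x, n_{(1)}\rangle \langle m, n_{(2)}\rangle = \langle m,\, \langle x, n_{(1)}\rangle n_{(2)}\rangle,$$
which exhibits $\widetilde{x}$ as the prescribed composite $M \to H \otimes M \to M$; the formula for $\widetilde{m}$ is symmetric.

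Property (1) is immediate from the fact that $\alpha^*$ is graded (of degree $0$) and that pairing with $x \in H_p$ extracts the degree-$p$ component. For (2), I would use associativity of the module action to write $\widetilde{x} \circ \widetilde{y}$ as the adjoint of $m \mapsto y(xm) = (yx)m$, giving $\widetilde{x} \circ \widetilde{y} = \widetilde{yx}$; commutativity of $H$ (which PSH-algebras enjoy, by Zelevinsky's decomposition theorem) then forces $\widetilde{yx} = \widetilde{xy}$, and reversing the argument yields $\widetilde{xy} = \widetilde{y} \circ \widetilde{x}$. The second line is analogous: each of $\widetilde{xm}$, $x^* \circ \widetilde{m}$, $\widetilde{m} \circ \widetilde{x}$ is shown to be the unique adjoint of $y \mapsto (xy)m$, again using associativity and commutativity.

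The substantive part is (3)--(5), where the $k$-Hopf axiom finally enters. For (3), I would expand $\widetilde{x}(ym) = \langle x, (ym)_{(1)}\rangle (ym)_{(2)}$ using the $k$-Hopf axiom $\alpha^*(ym) = \Psi^k(y_{(1)})m_{(1)} \otimes y_{(2)}m_{(2)}$, then split the pairing via $\mu$-$\mu^*$ adjointness as $\langle x, \Psi^k(y_{(1)})m_{(1)}\rangle = \langle x_{(1)}, \Psi^k(y_{(1)})\rangle \langle x_{(2)}, m_{(1)}\rangle$, and move $\Psi^k$ across the pairing using its self-adjointness (from Proposition 4). The remaining regrouping identifies $\langle \Psi^k(x_{(1)}), y_{(1)}\rangle y_{(2)} = [\Psi^k(x_{(1)})]^*(y)$ and $\langle x_{(2)}, m_{(1)}\rangle m_{(2)} = \widetilde{x_{(2)}}(m)$. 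Properties (4) and (5) are handled the same way, applied instead to $\widetilde{m}(xn) = \langle m, (xn)_{(2)}\rangle (xn)_{(1)}$; here (5) drops out directly, while (4) additionally requires cocommutativity of $H$ (again guaranteed by Zelevinsky) to rewrite the Sweedler factor $\langle m_{(1)}, x_{(2)}\rangle x_{(1)}$ as $m_{(1)}^*(x) = \langle m_{(1)}, x_{(1)}\rangle x_{(2)}$. The main obstacle throughout is bookkeeping: keeping straight which Sweedler index pairs with which variable, and remembering whether each juxtaposition denotes the action on $M$ or multiplication in $H$.
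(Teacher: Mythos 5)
Your proposal is correct and follows essentially the same route as the paper: everything reduces to adjointness manipulations combined with the $k$-Hopf axiom (the paper proves only (3), by pairing against a test element $n$ and applying the axiom to $\alpha^*(xn)$, whereas you apply it directly to $\alpha^*(ym)$ and then invoke the self-adjointness of $\Psi^k$ to move it across the pairing --- a cosmetic difference). Your treatment of existence/uniqueness, of (1)--(2) via commutativity and associativity, and of (4)--(5) via cocommutativity matches what the paper leaves to the reader with ``the others can be treated similarly.''
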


\begin{proof} We will prove property $(3)$, and the others can be treated similarly.  For $x, y \in H$ and $m, n \in M$ we have $$\begin{array} {lcl} \langle \widetilde{x}(ym), n \rangle &=& \langle y \otimes m, (\alpha^* \circ \alpha)(x \otimes n)\rangle\\&=& \langle y \otimes m, (\Psi^k(x_{(1)}) \otimes x_{(2)})\alpha^*(n)\rangle\\&=&\langle \alpha((\Psi^k(x_{(1)}) \otimes x_{(2)})^*(y \otimes m)), n\rangle\\&=&\langle[\Psi^k(x_{(1)})]^*(y)\widetilde{x_{(2)}}(m), n\rangle\end{array}$$ from which property $(3)$ follows by the non-degeneracy of $\langle \cdot, \cdot\rangle$.\end{proof}

\begin{proposition} As before, let $P \subset H$and $Q \subset M$ be the subgroups of primitive elements.  Let $(p_i)_{i = 1}^r$ and $(p_j')_{j = 1}^s$ be tuples of pairwise equal or orthogonal elements of $P$, let $m, n \in Q$ be equal or orthogonal primitive elements of $M$, and let $\pi = p_1 \cdots p_rm$ and $\pi' = p_1' \cdots p_s'n$.  Then $\langle \pi, \pi' \rangle = 0$ unless $m = n$, $r = s$, and the $p_i$ and $p_j'$ are equal up to rearrangement, in which case we have $$\langle \pi, \pi' \rangle = k^rn_1! \cdots n_v!\langle p_1, p_1\rangle\cdots\langle p_r, p_r\rangle\langle m, m\rangle,$$ where $n_i$ is the number of appearances of the $i^{th}$ distinct element in the list $p_1, \dots, p_r$. (The case $M = H$ with its canonical $1$-PSH module structure is treated in \cite[Proposition 2.3]{Zel}.)\end{proposition}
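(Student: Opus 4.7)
The plan is to prove the claim by induction on $r$. For the base case $r = 0$, $\pi = m$ and $\pi' = p_1' \cdots p_s' n$; if $s > 0$ then $\pi' \in IM$, so $\langle m, \pi'\rangle = 0$ by Proposition 7 (primitives are orthogonal to $IM$), while if $s = 0$ the inner product is $\langle m, n\rangle$, which vanishes by the equal-or-orthogonal hypothesis unless $m = n$, in which case it equals $\langle m, m\rangle$, matching the claimed formula (with empty products equal to $1$).

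For the inductive step $r \geq 1$ I would use adjointness to write $\langle \pi, \pi'\rangle = \langle p_1 \cdots p_{r-1} m,\; \widetilde{p_r}(p_1' \cdots p_s' n)\rangle$, then expand $\widetilde{p_r}(p_1' \cdots p_s' n)$ using property $(3)$ of the preceding proposition. Since $p_r$ is primitive, $\mu^*(p_r) = p_r \otimes 1 + 1 \otimes p_r$, so $(3)$ collapses to the Leibniz-type identity
$$\widetilde{p_r}(y m') = [\Psi^k(p_r)]^*(y)\, m' + y\, \widetilde{p_r}(m').$$
Iterating this and using $\widetilde{p_r}(n) = 0$ (immediate from $\alpha^*(n) = 1 \otimes n$ and $\deg p_r > 0$) gives
$$\widetilde{p_r}(p_1' \cdots p_s' n) = \sum_{j=1}^{s} [\Psi^k(p_r)]^*(p_j')\; p_1' \cdots \widehat{p_j'} \cdots p_s'\, n.$$

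The crucial evaluation is $[\Psi^k(p_r)]^*(p_j')$: since $p_r$ is primitive, $\mu^{*(k)}(p_r) = \sum_{i=1}^k 1 \otimes \cdots \otimes p_r \otimes \cdots \otimes 1$, so $\Psi^k(p_r) = k p_r$. For primitive $p, p' \in H$, the identity $\langle p^*(p'), x\rangle = \langle p', px\rangle$ shows $\langle p^*(p'), x\rangle = 0$ for any $x$ of positive degree, since then $px \in I^2$ and $p'$ is orthogonal to $I^2$; hence $p^*(p') \in H_0 \cong \mathbb{Z}$, and pairing against $1$ gives $p^*(p') = \langle p, p'\rangle$ when $\deg p = \deg p'$, while grading forces $p^*(p') = 0$ otherwise. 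Under the equal-or-orthogonal hypothesis this collapses to $p_r^*(p_j') = \langle p_r, p_r\rangle$ if $p_j' = p_r$ and $0$ otherwise, so
$$\langle \pi, \pi'\rangle = k \langle p_r, p_r\rangle \sum_{j:\, p_j' = p_r} \langle p_1 \cdots p_{r-1} m,\; p_1' \cdots \widehat{p_j'} \cdots p_s' n\rangle.$$
Applying the inductive hypothesis to each summand, the matching condition forces $\{p_1', \ldots, p_s'\} \setminus \{p_j'\} = \{p_1, \ldots, p_{r-1}\}$ as multisets and $m = n$; combined with $p_j' = p_r$ this gives $r = s$ and equality of the full multisets, as required. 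If $n_t$ is the multiplicity of $p_r$ in the combined list, the sum contains $n_t$ equal terms, and the inductive hypothesis contributes $k^{r-1}(n_t - 1)!\prod_{i\neq t} n_i!$ times the remaining $\langle p_i,p_i\rangle$'s and $\langle m,m\rangle$; combined with the prefactor $k\langle p_r, p_r\rangle$ and the combinatorial factor $n_t$ from the sum, $(n_t - 1)! \cdot n_t$ collapses into $n_t!$ and the stated formula drops out. The main subtlety is the auxiliary evaluation of $p^*(p')$ for primitive $p, p'$---this is exactly what channels the $k$-Hopf axiom into the clean factor $k^r$ in the final answer---after which the remaining work is careful bookkeeping of multiplicities.
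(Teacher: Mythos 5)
Your proof is correct and follows essentially the same route as the paper: peel off one primitive factor by adjointness, apply property $(3)$ with $\mu^*(p) = p \otimes 1 + 1 \otimes p$ and $\Psi^k(p) = kp$ to get the Leibniz-type expansion, evaluate $p^*(p')$ on primitives, and induct on $r$. The only cosmetic differences are that you iterate the module-level Leibniz rule directly where the paper instead invokes that $p^*$ is a derivation of $H$ (citing Zelevinsky), and that you spell out the short argument for $p^*(p') = \langle p, p' \rangle$ which the paper merely asserts.
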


\begin{proof} For $p \in P$ we have $\mu^*(p) = 1 \otimes p + p \otimes 1$, so by $(3)$ of the previous proposition we have $\widetilde{p}(xm) = (\Psi^k(p))^*(x)\widetilde{1}(m) + (\Psi^k(1))^*(x)\widetilde{p}(m) = kp^*(x)m + x\widetilde{p}(m)$ for any $x \in H$ and $m \in M$.  By \cite[Proposition 1.9f]{Zel}, that $p$ is primitive implies that $p^*$ is a derivation of $H$.  Therefore, we calculate $$\begin{array} {lcl} \langle \pi, \pi' \rangle & = & \langle (\Pi_{i \leq r} p_i) m, (\Pi_{j \leq s} p_j') n \rangle \\ & = & \langle (\Pi_{2 \leq i \leq r} p_i) m, \widetilde{p_1}((\Pi_{j \leq s} p_j')n) \rangle \\ & = & \langle (\Pi_{2 \leq i \leq r} p_i)m, k(p_1)^*(\Pi_{j \leq s} p_j')n + (\Pi_{j \leq s} p_j')\widetilde{p_1}(n) \rangle \\ & = & \displaystyle\sum\limits_{1 \leq l \leq s} k \langle (\Pi_{2 \leq i \leq r} p_i)m, (\Pi_{j \leq s, j \neq l} p_j')p_1^*(p_l') n \rangle \\& & + \langle (\Pi_{2 \leq i \leq r} p_i) m, (\Pi_{j \leq s} p_j') \widetilde{p_1}(n) \rangle. \end{array}$$  By the definition of primitivity and the orthogonality hypotheses, we have $p_1^*(p_l')$ is $0$ if $p_1 \neq p_l'$ and $\langle p_1, p_1\rangle$ otherwise, while $\widetilde{p_1}(n) = 0$.  The proposition then follows by induction.\end{proof}

Let $\mathcal{C} = \Omega(H) \cap P$ be the set of primitive irreducible elements of $H$ and let $\mathcal{D} = \Omega(M) \cap Q$ be the set of (module) primitive irreducible elements of $M$.  As in \cite[\S2.5]{Zel}, let $S(\mathcal{C}, \ints^{\geq 0})$ denote the additive monoid of functions $\mathcal{C} \rightarrow \ints^{\geq 0}$ of finite support.  For $d \in \mathcal{D}$ and $\phi \in S(\mathcal{C}, \ints^{\geq 0})$, define $$\pi_\phi = \prod_{c \in C} c^{\phi(c)} \in H, \ \ \ \pi_{d, \phi} = \pi_\phi d \in M.$$  Let $\Omega{\phi}$ be the set of irreducible elements $\omega \in H$ such that $\omega \leq \pi_\phi$, and similarly let $\Omega{d, \phi}$ be the set of irreducible constituents of $\pi_{d, \phi}$ in $M$.  Finally, set $$H(\phi) = \bigoplus_{\omega \in \Omega(\phi)} \ints \omega, \ \ \ M(d, \phi) = \bigoplus_{\omega \in \Omega(d, \phi)} \ints\omega, \ \ \ M(d) = \bigoplus_{\phi \in S(\mathcal{C}, \ints^{\geq 0})} M(d, \phi).$$

\begin{theorem} For $d, d' \in \mathcal{D}$ and $\phi, \phi' \in S(\mathcal{C}, \ints^{\geq 0})$, $\Omega(d, \phi)$ and $\Omega(d', \phi')$ are disjoint unless $(d, \phi) = (d', \phi')$.  $M$ has the $T$-group decomposition $$M = \bigoplus_{d \in \mathcal{D}, \phi \in S(\mathcal{C}, \ints^{\geq 0})} M(d, \phi)$$ and is graded with respect to $S(\mathcal{C}, \ints^{\geq 0})$ in the sense $$\alpha(H(\phi') \otimes M(d, \phi'')) \subset M(d, \phi' + \phi'')$$ $$\alpha^*(M(d, \phi)) \subset \bigoplus_{\phi' + \phi'' = \phi} H(\phi') \otimes M(d, \phi'').$$  In particular $M(d)$ is a $k$-PSH submodule of $M$, and as $k$-PSH modules: $$M = \bigoplus_{d \in \mathcal{D}} M(d).$$  Thus $M$ has a unique decomposition as a direct sum of $k$-PSH submodules with a single primitive irreducible element.\end{theorem}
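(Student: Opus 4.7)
The plan is to adapt Zelevinsky's proof of the PSH-algebra tensor product decomposition, with Proposition 13 supplying the requisite orthogonality and Propositions 2--3 providing the positive generators needed to account for every irreducible of $M$. First, for disjointness of the $\Omega(d, \phi)$, I would write $\pi_{d, \phi} = p_1 \cdots p_r \, d$ with the $p_i \in \mathcal{C}$ specified by $\phi$. Distinct elements of $\mathcal{C}$ (and of $\mathcal{D}$) are orthogonal irreducibles, so Proposition 13 applies and yields $\langle \pi_{d, \phi}, \pi_{d', \phi'}\rangle = 0$ unless $d = d'$ and $\phi = \phi'$; since $\pi_{d, \phi}$ and $\pi_{d', \phi'}$ are positive, their irreducible supports must then be disjoint.

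Next I would establish $M = \bigoplus_{d, \phi} M(d, \phi)$ by inducting on degree to show every $\omega \in \Omega(M)$ lies in some $\Omega(d, \phi)$. If $\omega \in M_0$, grading of $\alpha^*$ forces $\alpha^*_+(\omega) = 0$, so $\omega$ is primitive and $\omega = \pi_{\omega, 0} \in \Omega(\omega, 0)$; the same identification works whenever $\omega$ is primitive. Otherwise $\omega \notin Q$, and Proposition 11 together with self-adjointness shows that the irreducible expansion $\alpha^*_+(\omega) = \sum_i a_i (h_i \otimes m_i)$ satisfies $a_i = \langle h_i m_i, \omega\rangle > 0$ for some $i$ with $h_i \in \Omega(I)$ and $m_i \in \Omega(M)$. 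Propositions 2--3 supply $\phi_i \in S(\mathcal{C}, \ints^{\geq 0})$ with $h_i \leq \pi_{\phi_i}$, and since the grading forces $\deg(m_i) < \deg(\omega)$, the inductive hypothesis yields $(d, \psi_i)$ with $m_i \leq \pi_{d, \psi_i}$. Positivity of $\alpha$ and $\mu$ then gives $\omega \leq h_i m_i \leq \pi_{\phi_i} \cdot \pi_{d, \psi_i} = \pi_{d, \phi_i + \psi_i}$, placing $\omega$ in $\Omega(d, \phi_i + \psi_i)$.

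For the $S(\mathcal{C}, \ints^{\geq 0})$-grading of $\alpha$, the identity $\pi_{\phi'} \cdot \pi_{d, \phi''} = \pi_{d, \phi' + \phi''}$ together with positivity yields $\alpha(H(\phi') \otimes M(d, \phi'')) \subset M(d, \phi' + \phi'')$. The dual inclusion for $\alpha^*$ follows by self-adjointness: for $\omega \in \Omega(d, \phi)$, $\omega_H \in \Omega(\phi_H)$, and $\omega_M \in \Omega(d', \phi_M)$, the pairing $\langle \omega_H \otimes \omega_M, \alpha^*(\omega)\rangle = \langle \omega_H \omega_M, \omega\rangle$ vanishes unless $d = d'$ and $\phi_H + \phi_M = \phi$ by the action grading and the disjointness established above. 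Hence each $M(d)$ is stable under both $\alpha$ and $\alpha^*$ and is therefore a $k$-PSH submodule, yielding $M = \bigoplus_{d} M(d)$ as $k$-PSH modules. Uniqueness follows because in any decomposition into $k$-PSH submodules with a single primitive irreducible, the summands are indexed by that primitive irreducible, these indices must collectively exhaust $\mathcal{D}$, and the generation argument above forces the summand labeled by $d$ to contain (hence equal) $M(d)$.

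The main obstacle will be the inductive step above, where one must pass from the abstract statement ``$\omega$ is not primitive'' to a concrete factorization $\omega \leq h \cdot m$ with $h \in \Omega(I)$ and $\deg(m) < \deg(\omega)$. This requires using self-adjointness to convert $\alpha^*_+(\omega) \neq 0$ into positivity of a specific inner product $\langle h_i m_i, \omega\rangle$, and then leveraging the decomposition of $H$ to bound $h_i \leq \pi_{\phi_i}$; once this is in hand, the rest of the theorem assembles routinely from the disjointness and grading observations.
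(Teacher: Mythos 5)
Your proposal is correct and follows essentially the same route as the paper: disjointness via Proposition 13, the generating argument by induction on degree using the orthogonal-complement characterization of module primitivity together with Zelevinsky's decomposition of $H$ to obtain $\omega \leq \pi_{\phi'}\pi_{d,\phi''} = \pi_{d,\phi'+\phi''}$, and the two grading inclusions from positivity and adjointness. The only differences are cosmetic — you spell out the adjointness step for the $\alpha^*$-inclusion and the uniqueness of the final decomposition in slightly more detail than the paper does, and the primitivity criterion you cite is the paper's Proposition 7 rather than 11.
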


\begin{proof} The disjointness of $\Omega(d, \phi)$ and $\Omega(d', \phi')$ for $(d, \phi) \neq (d', \phi')$ follows from the preceding proposition.  Let $\omega \in \Omega(M)$.  For the T-group decomposition we need $\omega \in \pi_{d, \phi}$ for some $(d, \phi)$.  This is trivial if $\omega \in \mathcal{D}$, so since $M_0 \subset Q$ it suffices to consider $\omega \in M_n$ not primitive and with $n > 0$.  Thus $\omega$ is \emph{not} in the orthogonal complement of $IM$ by Proposition 7, so there exists $x \in I, m \in M$ with $\omega \leq xm$.  By positivity we may assume $x \in \Omega(H)$, $m \in \Omega(M)$, and $x \neq 1$.  From \cite[Proposition 2.5a]{Zel} we have $x \leq \pi_\phi'$ for some $\phi' \in S(\mathcal{C}, \ints^{\geq 0})$, and by induction on $\deg \omega$ we can assume $m \leq \pi_{d, \phi''}.$  But then by positivity we have $\omega \leq \pi_\phi'\pi_{d, \phi''} = \pi_{d, \phi' + \phi''}.$

The first grading statement follows immediately from positivity, and the second follows from positivity, adjointness, and the disjointness statement.  \end{proof}

\section{$k$-PSH Modules and Representations of Wreath Products}

In this section we show that $k$-PSH modules appear naturally in the complex representation theory of finite wreath products and we describe the structure of these modules.  First, we recall the PSH-algebra associated with the complex representations of finite wreath products and apply the PSH-algebra decomposition theorem.  For a finite group $G$ and $n > 0$, the wreath product $S_n[G] = S_n \rtimes G^n$ is the semi-direct product associated to the action of $S_n$ on $G^n$ by permutation of indices.  Set $R(G) = \bigoplus_{n \geq 0} R_n(G)$ with $R_0(G) = \ints$ and, for $n > 0$, $R_n(G)$ the Grothendieck group of the category of finite-dimensional complex representations of $S_n[G]$.  Zelevinsky showed that then $R(G)$ has the structure of a PSH-algebra with multiplication and comultiplication given by identical formulas to the ones defining the corresponding structures on $R = R(1)$ with $S_n[G]$ in place of $S_n$.  The irreducible elements are the isomorphism classes of irreducible representations, and the irreducible primitive elements are the classes of the irreducible representations of $G = S_1[G]$.

Let $G$ be a finite group, and let $H \subset G$ be a subgroup.  There exists the graded linear map $\alpha \colon R(H) \otimes R(G) \rightarrow R(G)$ determined by setting $$\alpha = \ind_{S_k[H] \times S_l[G]}^{S_{k + l}[G]}$$ on representations, using the obvious identification of $S_k[H] \times S_l[G]$ as a subgroup of $S_{k + l}[G]$. Similarly, we have a graded linear map $\alpha^* \colon R(G) \rightarrow R(H) \otimes R(G)$ defined by $$\alpha^* = \displaystyle\sum\limits_{k + l = n} \res_{S_k[H] \times S_l[G]}^{S_n[G]}$$ on representations.  In the case $H = G$, we recover multiplication and comultiplication in the PSH-algebra $R(G)$.  These maps are graded, positive, and respect the unit and counit, and they are mutually adjoint by Frobenius reciprocity, so $R(G)$ is a positive self-adjoint Hopf module over $R(H)$.  Using Mackey's double-coset formula, the following two propositions show that under an additional hypothesis on $H$ $R(G)$ is a $[G : H]$-PSH module over $R(H)$.  We then give an explicit description of the module $R(G)$ over $R(H)$ in terms of only the PSH-algebra structures on these objects and the multiplication and comultiplication in the universal PSH-algebra.

To simplify the notation, given a sequence of integers $(n_1, \dots, n_l)$ let $W_{(n_1, \dots, n_l)}$ denote the direct product $S_{n_1}[G] \times \cdots \times S_{n_l}[G]$ and similarly let $V_{(n_1, \dots, n_l)}$ denote $S_{n_1}[H] \times \cdots \times S_{n_l}[H]$.  For a convenient description of wreath products, consider $S_n[G]$ as the group of monomial invertible matrices with entries in $\ints[G]$ and with all nonzero entries in $G$, and let $I_n$ denote the $n \times n$ identity matrix.

\begin{proposition}  Let $H \unlhd G$ be a normal subgroup with the property that every inner automorphism of $G$ restricts to an inner automorphism of $H$.  Let $t = [G \colon H]$, and let $\{g_1, \dots, g_t \}$ be a complete set of representatives for the elements of the quotient group $G / H$.  Suppose $p + q = r + s = n$.  Then the double-coset space $(V_p \times W_q) \backslash W_n / (V_r \times W_s)$ has a complete set of representatives parametrized by tuples $(a_1, \dots, a_t, b, c, d)$ of nonnegative integers satisfying the conditions $$a_1 + \cdots + a_t + b = r, c + d = s, a_1 + \cdots + a_t + c = p, b + d = q,$$ where the tuple $(a_1, \dots, a_t, b, c, d)$ corresponds to the representative $$\begin{bmatrix} g_1I_{a_1} & 0 & \cdots & 0 & 0 & 0 & 0 \\ 0 & g_2 I_{a_2} & \cdots & 0 & 0 & 0 & 0 \\ \vdots & \vdots & \ddots & 0 & 0 & 0 & 0 \\ 0 & 0 & 0 & g_t I_{a_t} & 0 & 0 & 0 \\ 0 & 0 & 0 & 0 & 0 & I_c & 0 \\ 0 & 0 & 0 & 0 & I_b & 0 & 0 \\ 0 & 0 & 0 & 0 & 0 & 0 & I_d \end{bmatrix}.$$\end{proposition}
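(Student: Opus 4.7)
The plan is to parametrize the double cosets in two stages: first using the underlying symmetric group, then refining by the $G$-coefficient data attached to each nonzero matrix entry.

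First I would project $W_n = S_n[G] \to S_n$ by forgetting $G$-coefficients, under which $V_p \times W_q$ and $V_r \times W_s$ map onto $S_p \times S_q$ and $S_r \times S_s$. A $W_n$-double coset maps to an $S_n$-double coset, and the latter are classically parametrized by $2 \times 2$ matrices $\begin{pmatrix} x & y \\ z & w \end{pmatrix}$ of nonnegative integers with row sums $(p, q)$ and column sums $(r, s)$, where $x$ counts column positions in $\{1, \ldots, r\}$ whose images under the permutation land in $\{1, \ldots, p\}$, and similarly for $y, z, w$. Setting $b = z$, $c = y$, $d = w$, and $x = a_1 + \cdots + a_t$, this pins down $(b, c, d)$ and $\sum a_i$ as invariants of the underlying $S_n$-double coset. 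Fix a standard permutation representative $\pi$ of this type, and view its preimage in $W_n$ as a disjoint union of $W_n$-double cosets to be classified further.

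Next I would analyze, entry by entry, how the $G$-coefficient attached to each nonzero position transforms under left multiplication by $V_p \times W_q$ and right multiplication by $V_r \times W_s$. For an entry in the ``upper right'' block (row in $\{1, \ldots, p\}$, column in $\{r+1, \ldots, n\}$, of size $y = c$), the ``lower left'' block (size $z = b$), or the ``lower right'' block (size $w = d$), at least one flanking subgroup has full $G$-coefficient freedom on that row or column, so the coefficient can be absorbed and normalized to $1$. For an entry in the ``upper left'' $H \times H$ block, only $H$-multiplications are allowed on either side, so the coefficient $g$ is determined only up to $H \backslash G / H$; by normality of $H$ this coincides with $G / H$, with representatives $\{g_1, \ldots, g_t\}$. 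Then using the pure-permutation subgroups of $V_p$ and $V_r$, I would permute rows and columns freely within each block to cluster the $x$ upper-left positions into $t$ groups of sizes $a_i$ sharing the representative $g_i$, and rearrange to match the layout displayed in the statement. For injectivity, $(b, c, d)$ is an invariant of the $S_n$-double coset, and $a_i$ equals the multiplicity of the coset $g_i H$ in the multiset of $G$-coefficients appearing in the upper-left block; this multiset is a double-coset invariant because any left or right action modifies each coefficient only by left or right $H$-multiplication (preserving $gH = Hg$, by normality) and permutes the underlying positions.

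The main obstacle is the second step: carefully verifying that, despite the intertwining of permutation and coefficient data within a single element of the flanking subgroups, the permissible modifications of a given entry's $G$-coefficient really do factor as independent left/right multiplication by $H$ in the upper-left block and by $G$ in the other three blocks. Once this local analysis is secure, the remaining steps are essentially bookkeeping on block sizes.
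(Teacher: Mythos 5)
Your argument is correct and is exactly the intended one here: the paper declares this proposition ``Clear'' and gives no proof, and your two-stage analysis (classical contingency-matrix parametrization of $(S_p\times S_q)\backslash S_n/(S_r\times S_s)$, then normalization of the $G$-coefficients block by block, with the upper-left block governed by $H\backslash G/H = G/H$ via normality) is the standard way to make it precise. The obstacle you flag at the end dissolves because each row and each column of a monomial matrix contains exactly one nonzero entry, so the left (row) and right (column) coefficient modifications of distinct entries never interfere, and your invariance argument for the multiset of cosets in the upper-left block correctly settles injectivity.
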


\begin{proof}  Clear.\end{proof}

\begin{theorem} If $H \unlhd G$ is a normal subgroup with the property that every inner automorphism of $G$ restricts to an inner automorphism of $H$, then $(R(G), \alpha, \alpha^*)$  is a $[G : H]$-PSH module over $R(H)$.\end{theorem}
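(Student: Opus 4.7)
The positive, graded, self-adjoint Hopf module structure of $R(G)$ over $R(H)$ has already been established in the discussion preceding Proposition 12, so with $t = [G : H]$ the substantive content of the theorem is the $t$-Hopf axiom
\[
\alpha^*(\alpha(\sigma \otimes \tau)) \;=\; \Psi^t(\sigma_{(1)})\,\tau_{(1)} \,\otimes\, \sigma_{(2)}\,\tau_{(2)}.
\]
By bilinearity it suffices to verify this for classes $\sigma$ of representations of $V_k = S_k[H]$ and $\tau$ of $W_l = S_l[G]$, where $\sigma_{(1)} \otimes \sigma_{(2)} = \mu^*_{R(H)}(\sigma)$ and $\tau_{(1)} \otimes \tau_{(2)} = \alpha^*(\tau)$ (so in particular $\tau_{(1)}$ lands in $R(H)$). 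The plan is to expand both sides as formal sums of induced representations indexed by the same combinatorial data and then match them term by term.

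For the left-hand side, $\alpha(\sigma \otimes \tau) = \ind_{V_k \times W_l}^{W_n}(\sigma \otimes \tau)$ with $n = k + l$, and $\alpha^*$ is the sum of restrictions to $V_p \times W_q$ over all $p + q = n$. Mackey's formula rewrites each such restriction of an induction as a sum over the double coset space $(V_p \times W_q) \backslash W_n / (V_k \times W_l)$, whose representatives are provided by Proposition 12 in terms of tuples $(a_1, \ldots, a_t, b, c, d)$. For the representative $w$ corresponding to a given tuple, normality of $H$ (which gives $g_i H g_i^{-1} = H$) implies that the diagonal twist by $g_i I_{a_i}$ preserves $V_{a_i}$, so the intersection $(V_p \times W_q) \cap w(V_k \times W_l)w^{-1}$ is the block-diagonal product $V_{a_1} \times \cdots \times V_{a_t} \times V_b \times V_c \times W_d$.

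The corresponding Mackey summand is the induction to $V_p \times W_q$ of the $w$-twist of the restriction of $\sigma \otimes \tau$ to this intersection. The twist acts nontrivially only on the $V_{a_i}$-components of $\sigma$, where it is conjugation by $g_i I_{a_i}$. Here the hypothesis beyond normality enters decisively: because conjugation by $g_i$ on $H$ is inner in $H$, conjugation by $g_i I_{a_i}$ on $V_{a_i}$ agrees with conjugation by some element of $V_{a_i}$ itself, so the twisted restriction of $\sigma$ to $V_{a_i}$ is isomorphic to the ordinary one. Each Mackey summand therefore simplifies to the plain induction of
\[
(\res_{V_{a_1} \times \cdots \times V_{a_t}}^{V_e}\sigma_{(1)}) \otimes \sigma_{(2)} \otimes \tau_{(1)} \otimes \tau_{(2)}
\]
from $V_{a_1} \times \cdots \times V_{a_t} \times V_b \times V_c \times W_d$ up to $V_p \times W_q$, where $e = a_1 + \cdots + a_t$, $f = b$, $g = c$, $h = d$, and the indicated comultiplicands are the degree $(e,f)$ and $(g,h)$ components of $\mu^*_{R(H)}\sigma$ and $\alpha^*\tau$.

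On the right-hand side, expanding $\Psi^t(\sigma_{(1)}) = \sum_{a_1 + \cdots + a_t = e} \ind_{V_{a_1}\times\cdots\times V_{a_t}}^{V_e} \res(\sigma_{(1)})$, then multiplying by $\tau_{(1)}$ in $R(H)$ and pairing with $\sigma_{(2)} \otimes \tau_{(2)}$ under the action $\alpha$, and applying transitivity of induction, produces precisely the same inductions indexed by the same tuples $(a_1, \ldots, a_t, b, c, d)$, giving the identity. The main obstacle is the step in which the inner-automorphism hypothesis is used to dissolve the $g_i$-twist into the identity on isomorphism classes, since without it the $t$ double cosets would yield genuinely distinct twisted representations that could not be aggregated by the single Hopf power operator $\Psi^t$; the remaining technical content is the careful bookkeeping of the matching $(a_i, b, c, d) \leftrightarrow (e_i, f, g, h)$ across the two sides.
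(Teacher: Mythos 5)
Your proposal is correct and follows essentially the same route as the paper: reduce to simple tensors, apply Mackey's double coset formula with the representatives of Proposition 12, use normality to identify the intersections as block products, use the inner-automorphism hypothesis to trivialize the $g_i$-twists (the paper packages this by choosing representatives $g_i$ that centralize $H$, while you argue the twisted restriction is isomorphic to the untwisted one — the same observation), and recognize the sum over $(a_1,\dots,a_t)$ as the iterated induction-restriction formula for $\Psi^t$. The only cosmetic discrepancy is the ordering of the $V_b$ and $V_c$ blocks, which the paper records via the transposition $\tau$ in its displayed formula; this does not affect the argument.
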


\begin{proof}  Again set $t = [G : H]$.  By the linearity of the maps involved, it suffices to verify the $t$-Hopf property on $\pi \otimes \sigma$, where $\pi$ is a representation of $V_r$ and $\sigma$ is a representation of $W_s$.  Suppose $r + s = n$.  First we compute $\alpha^\ast(\alpha(\pi \otimes \sigma))$ using Mackey's Theorem.  Recall that the definitions give $$\begin{array} {lcl} \alpha^\ast(\alpha(\pi \otimes \sigma)) & = & \displaystyle\sum\limits_{p + q = n} \res^{W_n}_{V_p \times W_q} ( \ind_{V_r \times W_s}^{W_n} (\pi \otimes \sigma)) \end{array}.$$

Let $N$ be the representative of the double-coset $(V_p \times W_q) \backslash W_n / (V_r \times W_s)$ as in the previous proposition parameterized by the tuple $(a_1, \dots, a_t, b, c, d)$ subject to the same constraints.  Since $H$ is normal, for $h \in H$ we have $g_i h g_j^{-1} \in H \iff i = j$, from which  it follows that $$(N (V_r \times W_s) N^{-1}) \cap (V_p \times W_q) = V_{(a_1, \dots, a_t, c, b)} \times W_d$$ (note the transposition of $b$ and $c$).

Let $\rho$ be the representation of the group $V_{(a_1, \dots, a_t, c, b)} \times W_d$ given by $\rho(x) = (\pi \otimes \sigma)(N^{-1}xN)$.  With the hypothesis that conjugation of $g \in G$ on $H$ is an inner automorphism, we may choose the representatives $g_i$ of $G/H$ so that $g_i$ centralizes $H$, and thus by Mackey $\alpha^*(\alpha(\pi \otimes \sigma))$ is $$\displaystyle\sum\limits_{a_1 + \cdots a_t + b = r, c + d = s} \ind_{V_{(a_1, \dots, a_t, c, b)} \times W_d}^{V_p \times W_q} ((1 \otimes \tau \otimes 1)(\res^{V_r}_{V_{(a_1, \dots, a_t, b)}}(\pi) \otimes \res^{W_s}_{V_c \times W_d}(\sigma)))$$ where recall $\tau$ is the transposition map swapping the $V_b$ and $V_c$ factors, corresponding to the $I_b$ and $I_c$ identity matrices appearing in the chosen representative $g_i$ from the previous proposition.  In view of the identity $$\Psi^t(\gamma) = \displaystyle\sum\limits_{a_1 + \cdots + a_t = a} \ind_{V_{(a_1, \dots, a_t)}}^{V_a} (\res^{V_a}_{V_{(a_1, \dots, a_t)}} (\gamma))$$ for representations $\gamma$ of $V_a$, we obtain $$\alpha^* \circ \alpha = (\mu_H \otimes \alpha) \circ (\Psi^t \otimes \tau \otimes 1) \circ (\mu_H^* \otimes \alpha^*)$$ (where $\mu_H$ is the multiplication and $\mu_H^*$ the comultiplication in $R(H)$ as a PSH-algebra) as needed.\end{proof}

Note that the previous proposition reaffirms the Hopf axiom for $R(G)$ in the case $H = G$.  We will give an alternative proof that $R(G)$ is a $k$-PSH module over $R(H)$, and, as a usual consequence of solving the same problem in two ways, we will recover some additional information, including the standard fact that the sum of the squares of the degrees of the irreducible representations of $G$ equals $|G|$.

\begin{proposition} Let $\delta \colon R(H) \rightarrow R(G)$ be the map induced by induction of representations from $S_n[H]$ to $S_n[G]$, and let $\delta^* \colon R(G) \rightarrow R(H)$ be the map induced by restriction.  Then $\delta$ and $\delta^*$ are mutually adjoint PSH-algebra morphisms, i.e. mutually adjoint, positive Hopf algebra morphisms. (Here $H \subset G$ can be an arbitrary subgroup.  The case H = 1 is \cite[Proposition 7.10a]{Zel}.)\end{proposition}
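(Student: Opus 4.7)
The plan is to check each condition of a PSH-algebra morphism in turn. Both $\delta$ and $\delta^*$ are graded by construction, positive since induction and restriction of representations are again representations, and respect the unit and counit since in degree zero both $R(H)_0$ and $R(G)_0$ equal $\ints$ and both maps act there as the identity. Mutual adjointness of $\delta$ and $\delta^*$ is exactly Frobenius reciprocity applied to the inclusion $S_n[H] \subset S_n[G]$ in each degree $n$. What remains is to verify that $\delta$ is a morphism of both algebras and coalgebras; compatibility with the antipode is then automatic for connected graded Hopf algebras.

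That $\delta$ is an algebra morphism is a clean application of transitivity of induction. For $\sigma \in R_k(H)$ and $\tau \in R_l(H)$, the chain $S_k[H] \times S_l[H] \subset S_{k+l}[H] \subset S_{k+l}[G]$ gives $\delta(\mu_H(\sigma \otimes \tau)) = \ind_{S_k[H] \times S_l[H]}^{S_{k+l}[G]}(\sigma \otimes \tau)$; using instead the chain $S_k[H] \times S_l[H] \subset S_k[G] \times S_l[G] \subset S_{k+l}[G]$, together with the fact that induction distributes over external tensor products, yields the same expression for $\mu_G(\delta(\sigma) \otimes \delta(\tau))$. Taking adjoints then shows $\delta^*$ is a coalgebra morphism.

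The remaining and substantive step is to show $\delta$ is also a coalgebra morphism, for which I would apply Mackey's double-coset formula to $\mu_G^* \circ \delta(\sigma) = \sum_{k+l=n} \res^{S_n[G]}_{S_k[G] \times S_l[G]} \ind_{S_n[H]}^{S_n[G]}(\sigma)$ for $\sigma \in R_n(H)$. The key observation, and the main obstacle, is that for each $k + l = n$ the double-coset space $(S_k[G] \times S_l[G]) \backslash S_n[G] / S_n[H]$ is a single point. Indeed, $S_n[G]/S_n[H]$ is canonically identified with $(G/H)^n$, and already the subgroup $G^k \times G^l \subset S_k[G] \times S_l[G]$ acts transitively there by coordinatewise left multiplication. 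Mackey therefore collapses to one summand, $\ind_{S_k[H] \times S_l[H]}^{S_k[G] \times S_l[G]} \res^{S_n[H]}_{S_k[H] \times S_l[H]}(\sigma)$, after recognizing that $(S_k[G] \times S_l[G]) \cap S_n[H] = S_k[H] \times S_l[H]$. Distributing induction over the tensor factors then identifies this, summed over $k + l = n$, with $(\delta \otimes \delta)(\mu_H^*(\sigma))$ as required. This argument is notably gentler than the one in Theorem 12, since the triviality of the double coset means no permutation or Hopf-power twist is forced on the induced representation.
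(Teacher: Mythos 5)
Your proof is correct and follows essentially the same route as the paper's: adjointness via Frobenius reciprocity, one half of the Hopf compatibility from transitivity of induction/restriction, and the other half from Mackey's formula with a single double coset (your computation of $(S_k[G]\times S_l[G])\backslash S_n[G]/S_n[H]$ is just the transpose of the paper's observation that $S_n[H]\backslash S_n[G]/(S_p[G]\times S_q[G])$ is trivial). The only difference is that you verify the algebra and coalgebra conditions directly for $\delta$, whereas the paper verifies them for $\delta^*$ and transfers to $\delta$ by adjointness.
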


\begin{proof} Adjointness follows from Frobenius reciprocity, and positivity reflects that $\delta$ and $\delta^*$ are derived from functors.  By adjointness, therefore, it suffices to show that $\delta^*$ is a Hopf algebra morphism, as then the same will follow for $\delta$.  The associativity of restriction implies $\delta^*$ is a coalgebra morphism.  That $\delta^*$ is an morphism of algebras follows immediately from an easy application of Mackey's theorem, noting that $S_n[H] \backslash S_n[G] / (S_p[G] \times S_q[G])$ is trivial. \end{proof}

Observe that $\delta$ is given by right-multiplication by $1 \in R(G)$.  As $\delta^*$ is adjoint to $\delta$, it is therefore given by the composition $$\begin{diagram} R(G) & \rTo^{\alpha^*} & R(H) \otimes R(G) & \rTo^{1 \otimes \langle \cdot, 1 \rangle}& R(H) \otimes \ints \cong R(H).\end{diagram}$$

\begin{proposition} $\delta^* \circ \delta = \Psi^{[G : H]}$ for any normal subgroup $H \unlhd G$ with the property that every inner automorphism of $G$ restricts to an inner automorphism of $H$.\end{proposition}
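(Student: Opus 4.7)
The plan is to compute $\delta^\ast\circ\delta$ on a representation $\pi$ of $V_n = S_n[H]$ directly via Mackey's double-coset formula, then identify the result with the right-hand side of the identity
$$\Psi^t(\gamma) = \sum_{a_1+\cdots+a_t = a}\ind_{V_{(a_1,\dots,a_t)}}^{V_a}\res^{V_a}_{V_{(a_1,\dots,a_t)}}(\gamma)$$
quoted in the proof of Theorem 12, where $t = [G:H]$.

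The first step is to describe the double cosets $V_n\backslash W_n/V_n$. Specializing Proposition 11 to $p = r = n$ and $q = s = 0$ collapses the constraints to $a_1+\cdots+a_t = n$ and $b = c = d = 0$, so the double cosets are parametrized by compositions $(a_1,\dots,a_t)$ of $n$, with representative the block-diagonal matrix $N = \diag(g_1 I_{a_1},\dots,g_t I_{a_t})$, where $\{g_1,\dots,g_t\}$ is a transversal for $G/H$.

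Next I would compute $NV_nN^{-1}\cap V_n$. By the same observation used in Theorem 12 (namely, since $H\unlhd G$, an element $g_ihg_j^{-1}$ lies in $H$ only when $i = j$), conjugation by $N$ preserves exactly the block-diagonal Young subgroup $V_{(a_1,\dots,a_t)} = V_{a_1}\times\cdots\times V_{a_t}$ inside $V_n$. Invoking the hypothesis that every inner automorphism of $G$ restricts to an inner automorphism of $H$, I can choose the coset representatives $g_i$ to centralize $H$, so that $N^{-1}xN = x$ for every $x\in V_{(a_1,\dots,a_t)}$, and the twisted representation $\pi^N$ on this intersection reduces to $\res^{V_n}_{V_{(a_1,\dots,a_t)}}\pi$. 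Plugging into Mackey's formula yields
$$\delta^\ast(\delta(\pi)) = \sum_{a_1+\cdots+a_t = n}\ind_{V_{(a_1,\dots,a_t)}}^{V_n}\res^{V_n}_{V_{(a_1,\dots,a_t)}}(\pi),$$
which is precisely $\Psi^t(\pi)$ by the identity above. Extending linearly and summing over $n$ gives $\delta^\ast\circ\delta = \Psi^{[G:H]}$.

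There is no real obstacle here: the entire argument is a specialization of the calculation already carried out in Theorem 12 to the case $q = s = 0$, and the only subtlety, choosing centralizing coset representatives, has likewise been established there. The one bit of care required is confirming that in this degenerate case the intersection $NV_nN^{-1}\cap V_n$ really does give the full Young subgroup $V_{(a_1,\dots,a_t)}$ (not a transposed variant), but this is immediate because the representative $N$ is now diagonal, with no $I_b$ or $I_c$ blocks to produce a swap.
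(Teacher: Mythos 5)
Your argument is correct, but it is not the route the paper takes. The paper's proof of this proposition is purely formal: having already established (Theorem 12) that $(R(G),\alpha,\alpha^*)$ satisfies the $[G:H]$-Hopf axiom, and having observed that $\delta$ is right-multiplication by $1\in R(G)_0$ while $\delta^*$ is the composite $(1\otimes\langle\cdot,1\rangle)\circ\alpha^*$, one simply evaluates the axiom on $h\otimes 1$: since $\alpha^*(1)=1\otimes 1$, this gives $\alpha^*(\delta(h))=\Psi^{t}(h_{(1)})\otimes h_{(2)}\cdot 1$, and pairing the second factor against $1$ kills all terms with $h_{(2)}$ of positive degree by the grading, leaving $\Psi^{t}(h_{(1)})e^*(h_{(2)})=\Psi^{t}(h)$ by the counit axiom. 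You instead rerun the Mackey computation from scratch in the special case $q=s=0$ of Proposition 11: the double cosets $V_n\backslash W_n/V_n$ are indexed by compositions $(a_1,\dots,a_t)$ of $n$, the intersection $NV_nN^{-1}\cap V_n$ is the Young subgroup $V_{(a_1,\dots,a_t)}$, and centralizing coset representatives kill the twist, yielding exactly the $\ind\circ\res$ expansion of $\Psi^t$. Both arguments are valid and rest on the same hypotheses; yours is self-contained at the level of representations and makes the role of the double cosets transparent, while the paper's is shorter and illustrates that the identity is a formal consequence of the $k$-Hopf module structure alone (so it would hold for any $k$-PSH module with a suitable unit element, not just this one). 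The one small thing worth making explicit in your write-up is why centralizing representatives exist: for $g\in G$ the hypothesis gives $h_0\in H$ with $ghg^{-1}=h_0hh_0^{-1}$ for all $h\in H$, and then $h_0^{-1}g$ lies in $gH$ (by normality of $H$) and centralizes $H$; this is the same observation used in the proof of Theorem 12.
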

\begin{proof} This follows immediately from the diagram defining the $[G : H]$-Hopf axiom and from the grading.\end{proof}

By the associativity of induction and restriction, the action and coaction maps $\alpha$ and $\alpha^*$ have the descriptions $\alpha = \mu_G \circ (\delta \otimes 1)$ and $\alpha^* = (\delta^* \otimes 1) \circ \mu_G^*$, where $\mu_G$, $\mu_G^*$ are the multiplication and comultiplication in $R(G)$.  Therefore, the module structure of $R(G)$ over $R(H)$ is determined by the PSH-algebra structures on these objects as well as the maps $\delta$ and $\delta^*$ between them.

We next give an explicit description of the PSH-algebra morphisms $\delta \colon R(H) \rightarrow R(G)$ and $\delta^* \colon R(G) \rightarrow R(H)$ for a finite group $G$ and \emph{any} finite subgroup $H \subset G$.  Recalling that the irreducible primitive elements of $R(G)$ are the classes of the irreducible representations of $G = S_1[G]$, Zelevinsky's decomposition theorem (Proposition 3) states that there is a PSH-algebra isomorphism $\Phi \colon \bigotimes^{|\irr(G)|} R \rightarrow R(G)$.  In \cite[Proposition 7.3]{Zel}, this isomorphism and its inverse are given explicitly, as follows.  Let $(\rho, V) \in \irr(G)$ be an irreducible representation, and let $R(\rho) \subset R(G)$ be the PSH-subalgebra defined in the statement of Proposition 3.  For a representation $(\pi, W)$ of $S_n$, define the representation $\Phi_\rho(\pi)$ of $S_n[G]$ in the space $W \otimes \bigotimes^n V$ such that $$\Phi_\rho(\pi)(\sigma)(w \otimes v_1 \otimes \cdots \otimes v_n) = \pi(\sigma)(w) \otimes v_{\sigma^{-1}(1)} \otimes \cdots \otimes v_{\sigma^{-1}(n)}$$ for $\sigma \in S_n$ and $$\Phi_\rho(\pi)(g_1, \dots, g_n)(w \otimes v_1 \otimes \cdots \otimes v_n) = w \otimes g_1v_1 \otimes \cdots \otimes g_nv_n$$ for $(g_1, \dots, g_n) \in G^n$.  This construction induces a linear map $\Phi_\rho \colon R \rightarrow R(G)$ which is a PSH-algebra isomorphism onto its image $R(\rho) \subset R(G)$.  The adjoint map $\Phi_\rho^* \colon R(G) \rightarrow R$ is orthogonal projection onto $R(\rho)$ and is given on representations $\bar{\pi}$ of $S_n[G]$ by the formula $\Phi_\rho^*(\bar{\pi}) = \hom_{G^n}(\otimes^n \rho, \bar{\pi}),$ where the $S_n$-action on this hom-space is given by $(\sigma.A)(x) = \sigma.(A(\sigma^{-1}.x))$ and the $S_n$-action on $\otimes^n \rho$ is by permutation of the tensor factors.  Therefore, the maps $$\Phi_G = \mu_G^{(|\irr(G)|)} \circ \bigotimes_{\rho \in \irr(G)} \Phi_\rho \colon R^{\otimes |\irr{G}|} \rightarrow R(G)$$ $$\Phi_G^* = \left(\bigotimes_{\rho \in \irr(G)} \Phi_\rho^* \right) \circ \mu_G^{*(|\irr(G)|)} \colon R(G) \rightarrow R^{\otimes |\irr{G}|}$$ are mutually adjoint PSH-algebra isomorphisms.  Of course, there is a very similar description of $R(H)$.  It is in terms of these descriptions that we will give formulas for the maps $\delta \colon R(H) \rightarrow R(G)$ and $\delta^* \colon R(G) \rightarrow R(H)$.

We now introduce the PSH-algebra morphism $\Psi^M \colon R^{\otimes k} \rightarrow R^{\otimes l}$ for $M = (m_{ij})$ a $l \times k$ matrix with entries in $\ints^{\geq 0}$, very analogous to the way in which linear transformations of vector spaces are described by matrices.  Let $\mu_k$ be the multiplication on $R^{\otimes k}$, similarly for $R^{\otimes l}$ and for comultiplication.  Then $\mu_k^{*(l)}$ maps  $R^{\otimes k}$ into $(R^{\otimes k})^{\otimes l}$.   In a sort of ``vertical'' Sweedler notation, consider writing the $kl$ tensor factors in a $l \times k$ matrix read from left to right, row by row, top to bottom.  As the comultiplication in $R^{\otimes k}$ is component-wise, this amounts to comultiplying each component ``down.''  The map $\Psi^M$ is then obtained by comultiplying $R^{\otimes k}$ $l$-times, applying $\Psi^{m_{ij}}$ to the $ij$-tensor entry, then multiplying the rows to obtain an element of $R^{\otimes l}$.  For instance, if we have $$M = \begin{bmatrix} 1&2&3\\4&5&6\end{bmatrix}$$ then $\Psi^M \colon R^{\otimes 3} \rightarrow R^{\otimes 2}$ is the map given on simple tensors by $$\Psi^M(x \otimes y \otimes z) = \Psi^1(x_{(1)})\Psi^2(y_{(1)})\Psi^3(z_{(1)}) \otimes \Psi^4(x_{(2)})\Psi^5(y_{(2)})\Psi^6(z_{(2)}).$$

\begin{proposition} $\Psi^M \colon R^{\otimes k} \rightarrow R^{\otimes l}$ is a PSH-algebra morphism.  We have the relations $$(1) \ \ (\Psi^M)^* = \Psi^{M^T}$$ $$(2) \ \ \Psi^M \circ \Psi^N = \Psi^{MN}$$ $$(3) \ \ \mu_l^{(n)} \circ (\Psi^{M_1} \otimes \cdots \otimes \Psi^{M_n}) \circ \mu_k^{*(n)} = \Psi^{\sum M_i}$$ $$\Psi^{nI} = \mu_k^{(n)} \circ \mu^{*(n)}_k =: \Psi_k^n$$ where $M^T$ denotes the transpose, $N$ is any $k \times m$ matrix and $M_1, \dots, M_n$ are any $l \times k$ matrices with entries in $\ints^{\geq 0}$, $nI$ is $n \geq 0$ times the identity matrix, and $\Psi_k^n$ is the $n^{th}$-Hopf power map on $R^{\otimes k}$.\end{proposition}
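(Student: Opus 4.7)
The plan is to verify each claim by unpacking the definition of $\Psi^M$ and repeatedly invoking the identities established in Propositions 4 and 5, making essential use of the commutativity and cocommutativity of $R$. That $\Psi^M$ is a PSH-algebra morphism will follow immediately from its construction: it is a composition of iterated comultiplication $\mu_k^{*(l)}$ on $R^{\otimes k}$, entrywise application of the PSH-algebra morphisms $\Psi^{m_{ij}}$ (Proposition 4), and iterated multiplication along the $l$ rows into $R^{\otimes l}$. Since $R^{\otimes k}$ is commutative and cocommutative, $\mu_k$ is a coalgebra morphism and $\mu_k^*$ is an algebra morphism; combined with the positivity and self-adjointness of each building block, this exhibits $\Psi^M$ as a positive, graded, self-adjoint Hopf algebra morphism.

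For (1), I would take the adjoint of the composition defining $\Psi^M$. The pair $(\mu, \mu^*)$ is self-adjoint on $R$ and hence on $R^{\otimes k}$ and $R^{\otimes l}$, and each $\Psi^{m_{ij}}$ is self-adjoint by Proposition 4. Reversing the composition exchanges $\mu_k^{*(l)}$ with $\mu_l^{(k)}$ and reads the matrix of factors by columns instead of rows, so entry $(i,j)$ of $M$ becomes entry $(j,i)$, yielding $\Psi^{M^T}$.

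The heart of the proof is (2), and this is where the main obstacle lies. Given $N$ of shape $k \times m$ and $M$ of shape $l \times k$, the composition $\Psi^M \circ \Psi^N$ first comultiplies to produce a $k \times m$ array of factors, applies $\Psi^{n_{jp}}$ entrywise, multiplies along each row, then comultiplies to produce an $l \times k$ array, applies $\Psi^{m_{ij}}$ entrywise, and finally multiplies along each row. The decisive step is to commute the second comultiplication past the intervening multiplication, which is permitted because $\mu^*$ is an algebra morphism (the Hopf axiom), thereby producing a single $l \times k \times m$ array of factors. In position $(i,j,p)$ one obtains $\Psi^{m_{ij}} \circ \Psi^{n_{jp}} = \Psi^{m_{ij} n_{jp}}$ by Proposition 5. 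Contracting the middle index $j$ via row multiplication then uses the iterated identity $\mu^{(k)} \circ (\Psi^{a_1} \otimes \cdots \otimes \Psi^{a_k}) \circ \mu^{*(k)} = \Psi^{\sum_s a_s}$, which follows from Proposition 5 by induction on $k$ using associativity and coassociativity, giving $\Psi^{\sum_j m_{ij} n_{jp}} = \Psi^{(MN)_{ip}}$ in each entry $(i,p)$. The delicate part will be tracking the tensor-factor bookkeeping through these commutations; commutativity and cocommutativity of $R$ supply the necessary permutations to align the flattened array with the matrix-product layout.

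The first equation in (3) is a direct entrywise application of this same iterated identity $\mu^{(n)} \circ (\Psi^{a_1} \otimes \cdots \otimes \Psi^{a_n}) \circ \mu^{*(n)} = \Psi^{\sum a_i}$, used to combine the $n$ summands $M_1, \dots, M_n$ position-by-position. For the second equation, when $M = nI$ the off-diagonal entries contribute $\Psi^0$ factors, which by the counit identity $\mu \circ (\Psi^0 \otimes 1) \circ \mu^* = \id$ (as exploited in the proof of Proposition 5) collapse after row multiplication, leaving $\Psi^n$ acting in the $i$-th tensor slot. Since multiplication and comultiplication on $R^{\otimes k}$ are componentwise, this coincides with $\mu_k^{(n)} \circ \mu_k^{*(n)} = \Psi_k^n$, completing the proposition.
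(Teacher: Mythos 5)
Your proposal is correct and follows essentially the same route as the paper, whose proof is a terse one-liner citing the fact that $\Psi^M$ is a composite of PSH-algebra morphisms, the adjointness axiom for (1), and Proposition 5 for (2)--(4). You have simply filled in the details of that outline — in particular the commutation of $\mu^*$ past $\mu$ via the Hopf axiom in (2) and the collapse of the $\Psi^0$ factors in the $nI$ case — all of which is sound.
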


\begin{proof} $\Psi^M$ is a composition of PSH-algebra morphisms so is a PSH-algebra morphism itself.  Property (1) follows from the adjointness axiom for PSH-algebras, and Properties (2), (3), and (4) follow from Proposition 5.\end{proof}

Given a finite group $G$ and any subgroup $H$, let $M_{H,G} = (m_{\pi\rho})_{\pi \in \irr(H), \rho \in \irr(G)}$ be the $|\irr(H)| \times |\irr(G)|$ matrix with $\pi\rho$-entry $m_{\pi\rho} = \langle \pi, \res_H^G \rho \rangle$, the multiplicity of $\pi$ in the restriction of $\rho$ to $H$.  Clearly $m_{\pi\rho} \in \ints^{\geq 0}$.  Then with the earlier identifications $R(G) \cong \bigotimes^{|\irr(G)|} R$, $R(H) \cong \bigotimes^{|\irr(H)|} R$, we have the following

\begin{theorem} The PSH-algebra morphisms $\delta \colon R(H) \rightarrow R(G)$ and $\delta^* \colon R(G) \rightarrow R(H)$ are given by $$\delta^* = \Psi^{M_{H,G}} \text{,} \ \ \delta = \Psi^{M_{H, G}^T}.$$\end{theorem}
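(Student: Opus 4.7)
The two asserted identities are mutually adjoint: by Property (1) of Proposition 17 we have $(\Psi^{M_{H,G}^T})^* = \Psi^{M_{H,G}}$, and by Frobenius reciprocity (Proposition 15) $\delta$ is adjoint to $\delta^*$. Hence the two identities are equivalent, and it suffices to prove $\delta^* = \Psi^{M_{H,G}}$.

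The plan is to reduce equality of two PSH-algebra morphisms $f \colon R(G) \to R(H)$ to equality of their ``matrix entries'' $\Phi_\pi^* \circ f \circ \Phi_\rho \colon R \to R$, indexed by $(\pi,\rho) \in \irr(H) \times \irr(G)$. For any Hopf algebra morphism $f$, the fact that $\mu^*$ commutes with both $f$ and $\Phi_\rho$ (itself a Hopf algebra morphism), together with the formula $\Phi_H^* = (\bigotimes_\pi \Phi_\pi^*) \circ \mu_H^{*(l)}$ from the excerpt and the identity $\Phi_H \circ \Phi_H^* = \id$ (which holds because $\Phi_H$ is a PSH-algebra isomorphism), yields for every $\omega \in R$
\[
f(\Phi_\rho(\omega)) \;=\; \Phi_H\Bigl(\bigl[\textstyle\bigotimes_{\pi} (\Phi_\pi^* \circ f \circ \Phi_\rho)\bigr]\bigl(\mu^{*(l)}(\omega)\bigr)\Bigr),
\]
where $l = |\irr(H)|$ and the bracketed tensor product of maps is applied factor-by-factor to $\mu^{*(l)}(\omega) \in R^{\otimes l}$. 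Since $R(G)$ is generated as an algebra by the subalgebras $\Phi_\rho(R) = R(\rho)$ and $f$ is an algebra morphism, the matrix entries determine $f$ completely. It thus suffices to verify $\Phi_\pi^* \circ f \circ \Phi_\rho = \Psi^{m_{\pi\rho}}$ for both $f = \delta^*$ and $f = \Psi^{M_{H,G}}$. For the latter this is a direct unwinding of the definition of $\Psi^M$: since $\Psi^m(1) = 1$, on the element $1 \otimes \cdots \otimes \omega \otimes \cdots \otimes 1 \in R^{\otimes k}$ only the $\rho$-column of the iterated-comultiplication matrix is nontrivial, and the $\pi$-projection picks out the $(\pi,\rho)$-entry giving $\Psi^{m_{\pi\rho}}(\omega)$.

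The main obstacle is the corresponding computation for $f = \delta^*$. Given a representation $\sigma$ of $S_n$, the explicit descriptions of $\Phi_\rho$ (acting on $W_\sigma \otimes V_\rho^{\otimes n}$) and $\Phi_\pi^*$ (as $\hom_{H^n}(V_\pi^{\otimes n},\,{-})$) yield
\[
\Phi_\pi^*\bigl(\delta^*(\Phi_\rho(\sigma))\bigr) \;=\; \hom_{H^n}\bigl(V_\pi^{\otimes n},\; W_\sigma \otimes V_\rho^{\otimes n}|_{H^n}\bigr) \;\cong\; W_\sigma \otimes \hom_H(V_\pi, V_\rho|_H)^{\otimes n} \;\cong\; \sigma \otimes (\mathbb{C}^{m_{\pi\rho}})^{\otimes n}
\]
as $S_n$-representations, with $S_n$ permuting the last $n$ multiplicity factors. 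It remains to identify $\sigma \otimes (\mathbb{C}^m)^{\otimes n}$ with $\Psi^m(\sigma)$ in $R$, which I would prove via the projection formula $\ind_J^{S_n}(\res_J^{S_n}\sigma) \cong \sigma \otimes \mathbb{C}[S_n/J]$ applied with $J = S_{n_1} \times \cdots \times S_{n_m}$ and summed over weak compositions $(n_1,\ldots,n_m)$ of $n$: the left side gives $\Psi^m(\sigma) = \mu^{(m)}(\mu^{*(m)}(\sigma))$ by the definitions of multiplication and comultiplication in $R$, while the right side collapses to $\sigma \otimes (\mathbb{C}^m)^{\otimes n}$ once one identifies the direct sum of coset permutation representations $\bigoplus_\alpha \mathbb{C}[S_n/S_\alpha]$ with the $S_n$-permutation representation on length-$n$ sequences drawn from an $m$-element alphabet.
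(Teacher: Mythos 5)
Your proof is correct, and its overall architecture coincides with the paper's: both reduce to $\delta^*$ by adjointness and Proposition 19(1), both use that $\delta^*$ and $\Phi_\rho$ are coalgebra morphisms to reduce the claim to the ``matrix entries'' $\Phi_\pi^* \circ \delta^* \circ \Phi_\rho$, and both compute the relevant hom-space as a multiplicity space $(\mathbb{C}^{m_{\pi\rho}})^{\otimes n}$ with $S_n$ permuting factors. Where you genuinely diverge is in the endgame. The paper first reduces to the algebra generators $x_n$ (the trivial representations of $S_n$), so it only needs the identification $(\mathbb{C}^m)^{\otimes n} \cong \Psi^m(x_n)$, which it proves by a character count: it invokes Proposition 18 to realize $\Psi^m(x_n)$ as $\res_{S_n}^{S_n[K]}\ind_{S_n}^{S_n[K]} x_n$ for an auxiliary group $K$ of order $m$ and computes both characters to be $m^{\mathrm{cycles}(\sigma)}$. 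You instead prove the stronger statement $\Psi^m(\sigma) \cong \sigma \otimes (\mathbb{C}^m)^{\otimes n}$ for an arbitrary representation $\sigma$ of $S_n$, via the projection formula $\ind_{S_\alpha}^{S_n}\res_{S_\alpha}^{S_n}\sigma \cong \sigma \otimes \mathbb{C}[S_n/S_\alpha]$ summed over weak compositions $\alpha$, together with the orbit decomposition of $[m]^n$ under $S_n$. Your route buys two things: it is self-contained (the paper's character argument leans on Proposition 18, which in turn rests on the Mackey computation of Theorem 14, so your version avoids that dependency), and it makes the reduction to algebra generators unnecessary since you verify the matrix-entry identity on all of $R$. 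The paper's route is shorter at the final step because comparing two explicit character values requires no discussion of orbit structure on $[m]^n$. Both arguments are complete; the only caution on your side is that the claim ``the matrix entries determine $f$'' should be routed exactly through your displayed reconstruction formula (as you do), since $\Phi_\pi^*$ alone is not injective.
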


\begin{proof} In view of Propositions 17 and 19(1) we need only verify the identity for $\delta^*$.  Since $\delta^*$ is an algebra morphism, we then need only check that for $\rho \in \irr(G)$ the composition $$\begin{diagram} R&\rTo^{\ \Phi_{\rho\ \ }}&R(G)&\rTo^{\ \delta^*\ \ }&R(H)&\rTo^{\ \Phi_H^*\ \ }&R^{\otimes |\irr(H)|}\end{diagram}$$ is the map $$\left(\bigotimes_{\pi \in \irr(H)} \Psi^{\langle \pi, \delta^*\rho\rangle} \right) \circ \mu^{*(|\irr(H)|)}.$$  But $\delta^*$ and $\Phi_\rho$ are coalgebra morphisms, so $$\begin{array} {lcl} \Phi_H^* \circ \delta^* \circ \Phi_\rho &=& \left(\bigotimes_{\pi \in \irr(H)} \Phi_{\pi}^* \right) \circ \mu_H^{*(|\irr(H)|)} \circ \delta^* \circ \Phi_\rho\\&=& \left(\bigotimes_{\pi \in \irr(H)} (\Phi_{\pi}^* \circ \delta^* \circ \Phi_\rho) \right) \circ \mu^{*(|\irr(H)|)}.\end{array}$$  Each of these maps are algebra morphisms, so it suffice to check that they agree on a set of algebra generators of $R$, and for which we may choose the classes $x_n$ of the trivial representations of $S_n$, so we need only check $$\Phi_\pi^* \circ \delta^* \circ \Phi_\rho(x_n) = \Psi^{\langle \pi, \delta^*\rho\rangle}(x_n).$$  We have $$\delta^* \circ \Phi_\rho(x_n) = \bigoplus_{\pi_1, \dots, \pi_n \in \irr(H)} \left(\prod_{i = 1}^n m_{\pi_i\rho}\right) \pi_1 \otimes \cdots \otimes \pi_n.$$  By the definition of $\Phi_\pi^*$ and Schur's lemma, it follows that $$\Phi_\pi^* \circ \delta^* \circ \Phi_\rho(x_n) \cong \hom_{H^n}(\pi^{\otimes n}, (m_{\pi\rho} \pi)^{\otimes n}) \cong (\mathbb{C}^{m_{\pi\rho}})^{\otimes n}$$ where $S_n$ acts naturally by permuting the tensor factors.

We now need only check that this representation is isomorphic to $\Psi^{m_{\pi\rho}}(x_n)$, and for that it suffices to compare the characters.  Let $\sigma \in S_n$.  The standard basis of simple tensors of $(\mathbb{C}^m)^{\otimes n}$ is permuted under the action of $S_n$, and therefore the character value at $\sigma$ is the number of sequences of indices $(i_1, \dots, i_n)$, $1 \leq i_j \leq m$, such that $(\sigma(i_1), \dots, \sigma(i_n)) = (i_1, \dots, i_n)$.  Clearly this is $m^{\text{cycles}(\sigma)}$, where $\text{cycles}(\sigma)$ is the number of cycles of $\sigma$.

By Proposition 18, we have $\Psi^m(x_n) \cong \res_{S_n}^{S_n[K]}(\ind_{S_n}^{S_n[K]} x_n)$ for any group $K$ of order $m$.  The standard formula for induced characters therefore gives that the value of this character at $\sigma \in S_n$ is equal to $$\displaystyle\sum\limits_{d \in K^n} 1_{S_n}(d\sigma d^{-1})$$ where $1_{S_n}$ is the indicator function of $S_n \subset S_n[K]$.  We have $d\sigma d^{-1} = \sigma (\sigma^{-1}d\sigma d^{-1})$ with $\sigma^{-1}d\sigma d^{-1} \in K^n$, so $d\sigma d^{-1} \in S_n \iff d \sigma d^{-1} = \sigma$.  But the $d$ which centralize $\sigma$ are precisely the $d$ with all coordinates indexed by the same cycle of $\sigma$ equal.  There are $m^{\text{cycles}(\sigma)}$ such $d \in K^n$, as needed.\end{proof}

Combining the results of Theorem 16, Proposition 19, and Theorem 20, we have the following theorem which recovers, in the case $H = 1$, the standard fact that the sum of the squares of the dimensions of the irreducible complex representations of a finite group $G$ is $|G|$:

\begin{theorem} If $G$ is a finite group and $H \unlhd G$ is a normal subgroup so that the natural map $G \rightarrow \aut(H)$ induced by conjugation factors through the inclusion $\inn(H) \rightarrow \aut(H)$, then with $M_{H,G}$ as defined earlier we have the following matrix equality: $$M_{H, G} M_{H, G}^T = [G : H]I$$ where $I$ is the identity matrix of size $|\irr(H)|$.  Note the case $H = 1$ is precisely the standard result mentioned above.\end{theorem}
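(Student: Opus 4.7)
My plan is to compute the composite $\delta^* \circ \delta \colon R(H) \to R(H)$ in two different ways using results already established and then read off the desired matrix identity. The three ingredients are all in place: Theorem 16, through its immediate corollary Proposition 18, evaluates $\delta^* \circ \delta$ intrinsically as a Hopf power map on $R(H)$; Theorem 20 realizes $\delta$ and $\delta^*$ as matrix-indexed morphisms $\Psi^M$; and Proposition 19 supplies the composition and transposition calculus for these morphisms.

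First, on the one hand, under the hypothesis that conjugation by $G$ on $H$ is inner, Proposition 18 gives $\delta^* \circ \delta = \Psi^{[G : H]}$ as a self-map of $R(H)$. Transporting across the PSH-isomorphism $\Phi_H \colon R^{\otimes |\irr(H)|} \xrightarrow{\sim} R(H)$, which by Proposition 4 commutes with the Hopf power map, I would identify this endomorphism with $\Psi^{[G : H]}$ on $R^{\otimes |\irr(H)|}$, and by the last identity of Proposition 19 this in turn equals $\Psi^{[G : H] I}$. On the other hand, Theorem 20 gives $\delta = \Psi^{M_{H,G}^T}$ and $\delta^* = \Psi^{M_{H,G}}$, so part (2) of Proposition 19 yields $\delta^* \circ \delta = \Psi^{M_{H,G}} \circ \Psi^{M_{H,G}^T} = \Psi^{M_{H,G} M_{H,G}^T}$.

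Combining the two computations produces $\Psi^{M_{H,G} M_{H,G}^T} = \Psi^{[G : H] I}$, so the theorem reduces to the injectivity of the assignment $M \mapsto \Psi^M$ on square matrices of size $|\irr(H)|$ with entries in $\ints^{\geq 0}$. This is the step I expect to be the main (mild) obstacle since it is the only point that does not follow by direct quotation of earlier results. My plan is to handle it by evaluating on degree-one ``probe'' elements: let $y_j \in R^{\otimes |\irr(H)|}$ be the simple tensor with the primitive class of the trivial representation of $S_1$ in position $j$ and the unit in every other position. Unwinding the construction of $\Psi^M$ and using the elementary identity $\Psi^m(x) = m x$ for any primitive $x$ (valid for $m = 0$ as well, since $e^*(x) = 0$ for positive-degree $x$), I would verify $\Psi^M(y_j) = \sum_i m_{ij} y_i'$, where $y_i' \in R^{\otimes |\irr(H)|}$ is the analogous probe on the target side. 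Each entry $m_{ij}$ is then recovered as $\langle \Psi^M(y_j), y_i' \rangle$, and applying this recovery to both sides of $\Psi^{M_{H,G} M_{H,G}^T} = \Psi^{[G : H] I}$ yields the stated matrix equality.
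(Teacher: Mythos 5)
Your proposal is correct and follows exactly the paper's route: the paper's proof is precisely the combination of Theorem 16 (via Proposition 18), Theorem 20, and Proposition 19, reducing everything to the single observation that $M \neq M'$ implies $\Psi^M \neq \Psi^{M'}$. Your degree-one probe argument with primitive elements (using $\Psi^m(x) = mx$) is a valid and welcome justification of that injectivity step, which the paper merely asserts.
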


\begin{proof} All that is left is the observation that if $M \neq M'$ then $\Psi^M \neq \Psi^{M'}$.\end{proof}

Finally, further paraphrasing these results in the case $H = 1$, we describe the $|G|$-PSH module $R(G)$ over $R = R(1)$ in terms of the constructions of Propositions 9 and 10.  For $d \geq 0$, we have $\Psi^d \colon R \rightarrow R$ is a self-adjoint PSH-algebra morphism and $\Psi^d \circ \Psi^d = \Psi^{d^2}$, so setting $\delta = \delta^* = \Psi^d$ in Proposition 10 $R$ can be given the structure of a $d^2$-PSH module over itself.  Let $R^{(d)}$ denote $R$ with this module structure.  Then as a corollary of Theorem 20 we have the following

\begin{corollary} For any finite group $G$, we have, as $|G|$-PSH modules over $R$, $$R(G) \cong \bigotimes_{\omega \in \irr(G)} R^{(\dim \omega)}.$$\end{corollary}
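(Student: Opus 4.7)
The plan is to transfer the $|G|$-PSH module structure on $R(G)$ over $R$ across the PSH-algebra isomorphism $\Phi_G \colon R^{\otimes |\irr(G)|} \to R(G)$ of Proposition 3, and then recognize the pulled-back structure as the iterated tensor product $\bigotimes_{\omega \in \irr(G)} R^{(\dim\omega)}$ built from Proposition 10 followed by Proposition 9.

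First I would specialize Theorem 20 to the case $H = 1$. Since $\irr(1) = \{1\}$, the matrix $M_{1,G}$ is the $1 \times k$ row vector (where $k = |\irr(G)|$) whose $\omega$-entry is $\langle 1, \res^G_1 \omega\rangle = \dim\omega$. Theorem 20 then identifies $\delta$ with $\Psi^{M_{1,G}^T} \colon R \to R^{\otimes k}$, and unwinding the ``vertical Sweedler'' definition of $\Psi^M$ gives the explicit formula
\[ \delta(x) = \Psi^{\dim\omega_1}(x_{(1)}) \otimes \cdots \otimes \Psi^{\dim\omega_k}(x_{(k)}), \]
where $\mu^{*(k)}(x) = x_{(1)} \otimes \cdots \otimes x_{(k)}$. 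Dually, $\delta^* = \Psi^{M_{1,G}}$ sends $y_1 \otimes \cdots \otimes y_k$ to $\Psi^{\dim\omega_1}(y_1) \cdots \Psi^{\dim\omega_k}(y_k)$.

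Next, since $\alpha = \mu_G \circ (\delta \otimes 1)$ and $\alpha^* = (\delta^* \otimes 1) \circ \mu_G^*$, and since $\mu_G$ and $\mu_G^*$ correspond under $\Phi_G$ to componentwise multiplication and comultiplication in $R^{\otimes k}$, I would combine this with the formulas just obtained to check that the pulled-back action on $R^{\otimes k}$ is
\[ h \otimes (x_1 \otimes \cdots \otimes x_k) \mapsto \Psi^{\dim\omega_1}(h_{(1)}) \cdot x_1 \otimes \cdots \otimes \Psi^{\dim\omega_k}(h_{(k)}) \cdot x_k, \]
with the coaction given by the analogous dual expression.

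Finally, I would apply Proposition 10 with $K = H = R$ and $\delta = \delta^* = \Psi^{\dim\omega}$ to realize $R^{(\dim\omega)}$ as a $(\dim\omega)^2$-PSH module over $R$ (using $\Psi^{\dim\omega} \circ \Psi^{\dim\omega} = \Psi^{(\dim\omega)^2}$ from Proposition 5), and then iterate Proposition 9 to obtain a $\sum_\omega (\dim\omega)^2 = |G|$-PSH module structure on $\bigotimes_\omega R^{(\dim\omega)}$. A direct unwinding shows its action is exactly the displayed formula above, so $\Phi_G$ is an isomorphism of $|G|$-PSH modules. The only real obstacle is bookkeeping: matching the vertical/horizontal multiplication conventions of $\Psi^{M_{1,G}^T}$ with the iterated comultiplication produced by repeatedly applying Proposition 9, and confirming that the total degree $\sum (\dim\omega)^2$ equals $[G:1] = |G|$, which is exactly Theorem 21 with $H = 1$ and so makes the $k$-values in the two constructions match.
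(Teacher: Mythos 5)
Your proposal is correct and follows exactly the route the paper intends: the corollary is stated as an immediate consequence of Theorem 20 specialized to $H=1$ (where $M_{1,G}$ is the row vector of dimensions), combined with the observation that $\alpha = \mu_G \circ (\delta \otimes 1)$ and $\alpha^* = (\delta^* \otimes 1) \circ \mu_G^*$ and the constructions of Propositions 9 and 10, with $\sum_\omega (\dim\omega)^2 = |G|$ matching the $k$-values. Your write-up simply makes explicit the bookkeeping that the paper leaves to the reader.
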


\section{Appendix: Restricted Wreath Products} Let $G$ be a finite abelian group.  We may then realize the wreath product $S_n[G] := S_n \rtimes G^n$ as the group of monomial matrices with all nonzero entries in $G$.  As $G$ is abelian there is a surjection $S_n[G] \rightarrow G$ by taking the sum of the elements in $G$ appearing in a matrix.  For a subgroup $H \subset G$ let $G_n(G, H)$ denote the kernel of the composition $S_n[G] \rightarrow G \rightarrow G/H$, so that $G_n(G, G) = S_n[G]$ and $G_n(G, H)$ is the group of monomial matrices with entries in $G$ whose entries sum to an element of $H$.  We will refer to the groups $G_n(G, H)$ as \emph{restricted wreath products}.

Note that when $G$ is cyclic this construction yields the finite complex reflection groups in the family $G(m, p, n)$ where $p$ divides $m$.  Specifically, we have $G(m, p, n) = G_n(\ints/m, p\ints/m)$.

Let $R_0(G, H) = \ints$ and for $n > 0$ let $R_n(G, H) = K_0(\bold{Rep}-G_n(G, H))$ denote the Grothendieck group of the category of finite dimensional complex representations of $G_n(G, H)$.  We then construct the graded abelian group $$R(G, H) = \bigoplus_{n \geq 0} R_n(G, H)$$ which has a $T$-group structure with the distinguished graded basis given by the isomorphism classes of irreducible representations (along with $1 \in \ints$ in degree 0) along with a graded nondegenerate symmetric bilinear form given by the usual pairing of representations.  This form will be denoted $\langle \cdot, \cdot \rangle$.  Note that the irreducible elements form a graded orthonormal basis for $R(G, H)$.

Using induction and restriction, one can place graded product and coproduct structures on $R(G, H)$.  In particular, we have an embedding of groups $G_k(G, H) \times G_l(G, H) \subset G_{k + l}(G, H)$ by the block-diagonal embedding of matrices, so we have an induction functor $$\ind : \rep(G_k(G, H) \times G_l(G, H)) \rightarrow \rep(G_{k + l}(G, H))$$ and a restriction functor $$\res: \rep(G_{k + l}(G, H)) \rightarrow \rep(G_k(G, H) \times G_l(G, H)).$$  These are exact functors, so we obtain maps at the level of Grothendieck groups: $$m_{k, l} : R_k(G, H) \otimes R_l(G, H) \rightarrow R_{k + l}(G, H)$$ $$m_{k, l}^*: R_{k + l}(G, H) \rightarrow R_k(G, H) \otimes R_l(G, H)$$ in view of the natural isomorphism $$R_k(G, H) \otimes R_l(G, H) \cong K_0(\rep(G_k(G, H) \times G_l(G, H))).$$  For $k = 0$ or $l = 0$, let $m_{k, l}$ and $m^*_{k, l}$ be the maps given by the natural isomorphism $R_k \otimes \ints \cong \ints \otimes R_k \cong R_k$.  Set $$m := m_{G, H} =  \sum_{k, l \geq 0} m_{k, l} : R(G, H) \otimes R(G, H) \rightarrow R(G, H)$$ and $$m^* := m^*_{G, H} = \sum_{k, l \geq 0} m_{k, l}^* : R(G, H) \rightarrow R(G, H) \otimes R(G, H).$$  It is immediate that $m_{G, H}$ gives $R(G, H)$ the structure of a graded commutative algebra with unit and that $m_{G, H}^*$ gives $R(G, H)$ the structure of a graded cocommutative coalgebra with counit.  Furthermore, by Frobenius reciprocity $m_{G, H}$ and $m^*_{G, H}$ are adjoint operators with respect to the inner product on $R(G, H)$ and the induced graded inner product on $R(G, H) \otimes R(G, H)$.  As for usual wreath products, $m_{G, H}$ and $m_{G, H}^*$ are positive maps with respect to the $T$-group structure.

Next, we will construct a natural positive injective map of algebras $$\Phi: \bigotimes_{l \in H^*} R(G/H, 1) \hookrightarrow R(G, H)$$ where $H^*$ is the group of linear characters of $H$, and we will see a weak form of surjectivity in the sense that every irreducible element in $R(G, H)$ occurs as a constituent of some element of the image of this map.  For $H = G$, the case of usual wreath products, $R(G/G, 1) = R(1, 1)$ is the Hopf algebra of integral symmetric functions and the injection above is the usual isomorphism of Hopf algebras known in that case from Proposition 3.

Let $\phi \colon G_n(G, H) \rightarrow G_n(G/H, 1)$ be the map given by reducing the matrix entries mod $H$.  This gives rise to an exact sequence $$0 \rightarrow H^n \rightarrow G_n(G, H) \rightarrow G_n(G/H, 1) \rightarrow 0$$ where the first map is the diagonal embedding.  We obtain an additive functor $\phi^* : \rep(G_n(G/H, 1)) \rightarrow \rep(G_n(G, H))$ by pullback, which gives rise to a graded operator $\phi^* \colon R(G/H, 1) \rightarrow R(G, H)$.  As $\phi$ is surjective this map sends distinct irreducibles to distinct irreducibles, and in particular is an embedding of graded $T$-groups.

For $l \in H^*$, let $l_n$ be the linear character of $G_n(G, H)$ obtained by pulling back the linear character $l$ of $H$ by the homomorphism $G_n(G, H) \rightarrow H$.  We have the exact functor $\tau_{l, n} \colon \rep(G_n(G, H)) \rightarrow \rep(G_n(G, H))$ by tensoring with $l_n$, giving rise to a positive graded automorphism $\tau_l := \bigoplus_{n \geq 0} \tau_{l, n}$ of $R(G, H)$.  These operators have several nice properties.  We see $\tau_l \circ \tau_{l'} = \tau_{ll'}$.  In view of the inner product on $R(G, H)$ in terms of characters, we see $\tau_l^* = \tau_{\bar{l}} = \tau_{l^{-1}} = \tau_l^{-1}$, so $\tau_l$ is an orthogonal operator.  It is clear that $\tau_l$ is a map of coalgebras, but then $\tau_l^* = \tau_{\bar{l}}$ is also a map of coalgebras, so since the form on $R(G, H)$ is nondegenerate and $m_{G, H}$ and $m_{G, H}^*$ are mutually adjoint we conclude $\tau_l$ is also a map of algebras.  In summary, the rule $l \mapsto \tau_l$ gives an action of $H^*$ on $R(G, H)$ by positive graded orthogonal algebra/coalgebra automorphisms.

For $l \in H^*$, set $\Phi_l = \tau_l \circ \phi^* \colon R(G/H, 1) \rightarrow R(G, H)$.  We then have

\begin{proposition} $\Phi_l$ is an injective algebra and coalgebra morphism sending irreducibles to irreducibles. \end{proposition}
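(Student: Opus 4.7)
The plan is to factor $\Phi_l = \tau_l \circ \phi^\ast$ and handle each factor separately. The text has already established that $\tau_l$ is a positive graded orthogonal algebra and coalgebra automorphism of $R(G, H)$, and that $\phi^\ast$ is an embedding of graded $T$-groups (hence injective and sending distinct irreducibles to distinct irreducibles). Since $\tau_l$ acts by tensoring with a one-dimensional character, it preserves irreducibility, so the injectivity and irreducible-to-irreducible properties of $\Phi_l$ follow at once by composition. The substantive task is therefore to show that $\phi^\ast$ is itself an algebra and coalgebra morphism.

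My first step would be to verify the pullback-of-subgroups identity
$$\phi^{-1}\bigl(G_k(G/H, 1) \times G_l(G/H, 1)\bigr) = G_k(G, H) \times G_l(G, H).$$
The inclusion $\supset$ is clear from the definitions. For $\subset$, suppose $M \in G_{k+l}(G, H)$ with $\phi(M)$ block diagonal. Since $\phi$ acts trivially on the underlying permutation of a monomial matrix, $M$ is itself block diagonal, say $M = \mathrm{diag}(A, B)$. The hypothesis that $\phi(A)$ and $\phi(B)$ each have entry sum $0 \in G/H$ means that the entry sums of $A$ and $B$ separately lie in $H$, so $A \in G_k(G, H)$ and $B \in G_l(G, H)$. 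This identifies the natural commutative square with vertical maps $\phi$ and $\phi_k \times \phi_l$ and horizontal block-diagonal inclusions as a pullback of groups.

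Given the pullback square, the coalgebra identity $m_{G, H}^\ast \circ \phi^\ast = (\phi^\ast \otimes \phi^\ast) \circ m_{G/H, 1}^\ast$ is immediate: restricting a pullback representation to a subgroup coincides with pulling back its restriction along the compatible map on the subgroups. The algebra identity $\phi^\ast \circ \ind = \ind \circ (\phi_k \times \phi_l)^\ast$ is the main obstacle, and I would prove it by a direct character computation. Applying the standard Frobenius induced-character formula to both sides and using that the vertical surjections in the pullback square share a common kernel $H^{k+l}$ (so the two horizontal indices agree), each element $x \in G_{k+l}(G/H, 1)$ satisfying the conjugation condition $x\phi(g)x^{-1} \in G_k(G/H, 1) \times G_l(G/H, 1)$ is hit by exactly $|H|^{k+l}$ preimages in $G_{k+l}(G, H)$; these factors cancel against the ratio of subgroup orders so that the characters of $\phi^\ast(\ind V)$ and $\ind((\phi_k \times \phi_l)^\ast V)$ agree on every $g \in G_{k+l}(G,H)$. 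Composing this Hopf-morphism property of $\phi^\ast$ with the automorphism $\tau_l$ then yields the full statement for $\Phi_l$.
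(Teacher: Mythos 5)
Your proposal is correct and follows essentially the same route as the paper: reduce to showing $\phi^*$ is an algebra and coalgebra morphism using the already-established properties of $\tau_l$, and then verify the algebra identity via the Frobenius induced-character formula. Your pullback square $\phi^{-1}(G_k(G/H,1)\times G_l(G/H,1)) = G_k(G,H)\times G_l(G,H)$ together with the $|H|^{k+l}$ preimage count is just a more explicit rendering of the paper's observation that $\phi$ induces a bijection of the relevant coset spaces, so this is a correct filling-in of details rather than a different argument.
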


\begin{proof} In view of the preceding comments about $\tau_l$ and $\phi^*$ we need only check that $\phi^*$ respects multiplication and comultiplication.  It is obvious that $\phi^*$ is a map of coalgebras, and to establish that it is a map of algebras we need to check that the diagram $$\begin{diagram} R(G/H, 1) \otimes R(G/H, 1) &\rTo^{\phi^* \otimes \phi^*}& R(G, H) \otimes R(G, H)\\\dTo^{m_{G/H, 1}}&&\dTo^{m_{G, H}}\\R(G/H, 1) &\rTo^{\phi^*}&R(G, H)\end{diagram}$$ commutes.  For this just note that $\phi$ induces a bijection on the coset spaces $G_{k + l}(G, H)/(G_k(G, H) \times G_l(G, H))$ and $G_{k + l}(G/H, 1)/(G_k(G/H, 1) \times G_l(G/H, 1))$, and the commutativity of the diagram then follows immediately from the Frobenius formula for induced characters. \end{proof}

\begin{proposition}  The sub-(co)algebra $\Phi_l(R(G/H, 1))$ of $R(G, H)$ has a graded basis whose degree $n$ part consists of the isomorphism classes of those irreducible representations $\pi$ of $G_n(G, H)$ whose restriction to $H^n$ contains the irreducible constituent $l^{\otimes n}$.\end{proposition}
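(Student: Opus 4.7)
The plan is as follows. Since $\Phi_l$ sends distinct irreducibles to distinct irreducibles (as established in the paragraphs preceding Proposition 24), the degree $n$ part of $\Phi_l(R(G/H, 1))$ already has a graded basis consisting of the images $\Phi_l(\sigma)$ for $\sigma \in \irr(G_n(G/H, 1))$. Thus the task reduces to characterizing these images intrinsically: I would show that an irreducible representation $\pi$ of $G_n(G, H)$ arises as $\Phi_l(\sigma)$ for some $\sigma$ precisely when $l^{\otimes n}$ appears as an irreducible constituent of $\pi|_{H^n}$.

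For the forward direction, I would first observe that the linear character $l_n$ restricts on the diagonal subgroup $H^n \subset G_n(G, H)$ to $l^{\otimes n}$: the homomorphism $G_n(G, H) \to H$ used to define $l_n$ sums the matrix entries, and on a diagonal element $(h_1, \dots, h_n) \in H^n$ this sum is $h_1 + \cdots + h_n$, so $l_n|_{H^n} = l \otimes \cdots \otimes l$. Because $H^n = \ker \phi$, the pullback $\phi^*(\sigma)$ has trivial $H^n$-action, so $\Phi_l(\sigma) = l_n \otimes \phi^*(\sigma)$ restricts on $H^n$ to $\dim(\sigma)$ copies of $l^{\otimes n}$; in particular it contains $l^{\otimes n}$.

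For the converse, suppose $\pi$ is an irreducible representation of $G_n(G, H)$ with $l^{\otimes n} \leq \pi|_{H^n}$, and apply Clifford theory to the normal abelian subgroup $H^n$. The crucial intermediate claim is that $l^{\otimes n}$ is stable under the conjugation action of $G_n(G, H)$ on characters of $H^n$. To see this, observe that conjugation by a monomial matrix in $G_n(G, H)$ acts on the diagonal $H^n$ by permuting the entries: the diagonal $G^n$-part commutes with $H^n$ because $G$ is abelian, while the underlying permutation reindexes the diagonal entries. Since $l^{\otimes n}$ is symmetric in its entries, it is fixed, hence stable. By Clifford theory, $\pi|_{H^n}$ is then isotypic of type $l^{\otimes n}$, so $\pi \otimes l_n^{-1}$ is an irreducible representation of $G_n(G, H)$ on which $H^n$ acts trivially. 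It therefore descends through $\phi$ to an irreducible $\sigma$ of $G_n(G/H, 1)$ with $\phi^*(\sigma) = \pi \otimes l_n^{-1}$, and twisting back by $l_n$ yields $\pi = \tau_l(\phi^*(\sigma)) = \Phi_l(\sigma)$.

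The main obstacle is the Clifford-theoretic step: once the stability of $l^{\otimes n}$ under conjugation is established via the permutation description of the action of $G_n(G, H)$ on its diagonal subgroup, the remainder is bookkeeping about how the maps $\phi^*$ and $\tau_l$ interact with restriction to $H^n$.
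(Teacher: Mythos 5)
Your proposal is correct and follows essentially the same route as the paper: the forward direction is the observation that $\Phi_l(\sigma)=l_n\otimes\phi^*(\sigma)$ is $l^{\otimes n}$-isotypic on $H^n$, and the converse rests on the fact that $l^{\otimes n}$ is stable under the conjugation action (which permutes the diagonal $H^n$-entries), so the $l^{\otimes n}$-isotypic piece is a $G_n(G,H)$-submodule and $\tau_l^{-1}\pi$ descends through $\phi$. The paper states the stability of the isotypic piece without proof, so your explicit Clifford-theoretic justification merely fills in that detail.
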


\begin{proof} From the construction and the previous proposition, we need only check that any such $[\pi]$ is in the image of $\Phi_l$.  Note that the $l^{\otimes n}$-isotypic piece of $\pi|_{H^n}$ is actually a submodule for $G_n(G, H)$, so $\tau_l^{-1}\pi$ is an irreducible representation of $G_n(G, H)$ with trivial $H^n$-action, so has the structure of an irreducible $G_n(G/H, 1) = G_n(G, H)/H^n$-representation $\pi'$.  But then $\tau_{l}^{-1}\pi = \phi^*\pi'$ so $\pi = \Phi_l(\pi)$, as needed.\end{proof}

\begin{proposition}  The sub-(co)algebras $\Phi_l(R(G/H, 1))$ are pairwise orthogonal. \end{proposition}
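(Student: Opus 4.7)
The plan is to combine Proposition 24 with Clifford theory. By Proposition 24, in each degree $n > 0$ the subgroup $\Phi_l(R(G/H, 1))$ has a $T$-group basis consisting of exactly those irreducible representations $\pi$ of $G_n(G, H)$ such that $\pi|_{H^n}$ contains $l^{\otimes n}$ as a constituent. Since the inner product on the $T$-group $R(G, H)$ makes distinct irreducibles orthogonal, the desired orthogonality (in positive degrees, where the degree-$0$ parts of course coincide as $\ints$) reduces to showing that these bases are disjoint for distinct $l, l' \in H^*$. Equivalently, one must show that no irreducible $\pi$ of $G_n(G, H)$ has both $l^{\otimes n}$ and $(l')^{\otimes n}$ as $H^n$-constituents when $l \neq l'$.

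This I would establish via Clifford theory. Since $H^n$ is normal in $G_n(G, H)$, for any irreducible $\pi$ the set of irreducible $H^n$-constituents of $\pi|_{H^n}$ forms a single orbit under the conjugation action of $G_n(G, H)$ on the group of characters of $H^n$. Thus it suffices to show that $l^{\otimes n}$ and $(l')^{\otimes n}$ lie in distinct $G_n(G, H)$-orbits on this character group.

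To identify these orbits I would analyze the conjugation action explicitly. Because $G$ is abelian (the standing hypothesis of the appendix), any diagonal element $(g_1, \dots, g_n)$ in $G_n(G, H)$ centralizes $H^n$, so the action of $G_n(G, H)$ on $H^n$ (and hence on its characters) factors through the natural projection to $S_n$, which permutes the tensor coordinates of $(H^*)^n$. The character $l^{\otimes n}$ has every coordinate equal to $l$ and is therefore fixed by the entire $S_n$-action; similarly $(l')^{\otimes n}$ is $S_n$-fixed. Hence for $l \neq l'$ these are two distinct $S_n$-fixed points, lying in disjoint orbits, as required.

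The main obstacle is executing the Clifford-theoretic reduction cleanly and pinpointing where the abelianness of $G$ is essential --- specifically, in showing that the $G^n$-part of $G_n(G,H)$ acts trivially by conjugation on $H^n$, which is what reduces the orbit analysis on characters to a pure $S_n$-action. Once that is secured, the identification of $l^{\otimes n}$ and $(l')^{\otimes n}$ as distinct $S_n$-fixed points is immediate.
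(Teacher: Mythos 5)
Your argument is correct and follows essentially the same route as the paper: both reduce to the fact that an irreducible $\pi$ of $G_n(G,H)$ belonging to $\Phi_l(R(G/H,1))$ restricts to $H^n$ as an $l^{\otimes n}$-isotypic representation (the paper extracts this from the proof of Proposition 24; you re-derive it via Clifford theory together with the observation that $l^{\otimes n}$ is fixed by the conjugation action, which factors through $S_n$ because $G$ is abelian), after which orthogonality for $l \neq l'$ is immediate. The paper phrases the final step as the inner-product bound $\langle \pi, \sigma \rangle_{G_n(G,H)} \leq \langle \pi, \sigma \rangle_{H^n} = 0$ rather than as disjointness of the distinguished bases of irreducibles, but this is only a cosmetic difference.
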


\begin{proof} If $\pi$ and $\sigma$ are irreducible representations of $G_n(G, H)$ which are $l^{\otimes n}$-isotypic and $l'^{\otimes n}$-isotypic upon restriction to $H^n$, respectively, for some distinct linear characters $l \neq l'$ of $H$, then we have $$\langle \pi, \sigma \rangle_{G_n(G, H)} \leq \langle \pi, \sigma\rangle_{H^n} = \deg(\pi)\deg(\sigma)\langle l^{\otimes n}, l'^{\otimes n}\rangle_{H^n} = 0$$ so $\langle \pi, \sigma \rangle = 0$, and in view of the previous proposition and its proof, our claim follows.\end{proof}

Now for $l \in H^*$ define the graded operator $\Psi_l \colon R(G, H) \rightarrow R(G/H, 1)$ on the degree $n$ part by the operator associated to the exact functor $\Psi_l \colon \rep(G_n(G, H)) \rightarrow \rep(G_n(G/H, 1))$ defined by $\Psi_l(\pi) = \hom_{H^n}(l_n, \pi)$.  The $G_n(G/H, 1)$-action is given by $g.A \mapsto \tilde{g}A\tilde{g}^{-1}$ for $A \in \hom_{H^n}(l_n, \pi)$ and $\tilde{g} \in G_n(G, H)$ any lift of $g \in G_n(G/H, 1)$.  This map $g.A$ does not depend on the choice of lifting of $g$ because $A$ commutes with the action of $H^n$.  Clearly $g.A \in \hom_{H^n}(l^{\otimes n}, \pi)$ so $\Psi_l(\pi)$ is indeed a $G_n(G/H, 1)$-representation, and clearly $\Psi_l$ is an additive functor.

\begin{proposition} The functor $\Psi_l$ is left adjoint to $\Phi_l$, and in particular the operators $\Psi_l$ and $\Phi_l$ are adjoints on the level of the Grothendieck groups.  $\Psi_l$ is a homomorphism of (co)algebras.\end{proposition}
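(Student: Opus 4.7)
The plan is to establish the functorial adjunction $\Psi_l \dashv \Phi_l$ first, then deduce both parts of the proposition by standard adjoint-transpose arguments. No direct Mackey-style calculation is needed.

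The first step is to recognize $\Psi_l$ as a composition of familiar adjoint pieces. Since $l_n$ is a one-dimensional character, there is a canonical identification $\hom_{H^n}(l_n, \pi) \cong (l_n^{-1} \otimes \pi)^{H^n}$, so as functors $\Psi_l = (-)^{H^n} \circ \tau_{l^{-1}}$. The twist $\tau_{l^{-1}}$ is an equivalence of categories with quasi-inverse $\tau_l$, serving as its own two-sided adjoint. For the short exact sequence $1 \to H^n \to G_n(G,H) \to G_n(G/H, 1) \to 1$, inflation $\phi^*$ has $H^n$-coinvariants as a left adjoint by the standard adjunction for quotient groups. Since we are in characteristic zero with $H^n$ finite, the canonical map $V^{H^n} \to V_{H^n}$ is an isomorphism, so $(-)^{H^n}$ is also left adjoint to $\phi^*$. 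Splicing these adjunctions yields the natural isomorphism $\hom_{G_n(G,H)}(V, \Phi_l W) = \hom_{G_n(G,H)}(V, \tau_l \phi^* W) \cong \hom_{G_n(G,H)}(\tau_{l^{-1}} V, \phi^* W) \cong \hom_{G_n(G/H,1)}(\Psi_l V, W)$, which is the claimed adjunction.

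The second step passes to Grothendieck groups. Both $\Psi_l$ and $\Phi_l$ are exact, hence descend to linear operators between $R(G,H)$ and $R(G/H, 1)$. Because the inner product on the Grothendieck group of a semisimple category satisfies $\langle [\pi], [\sigma]\rangle = \dim \hom(\pi, \sigma)$ on isomorphism classes of simples, taking dimensions in the above natural isomorphism, evaluated on irreducibles, immediately gives $\langle \Psi_l[\pi], [\sigma]\rangle = \langle [\pi], \Phi_l[\sigma]\rangle$, establishing the operator adjointness.

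The final step deduces the (co)algebra morphism property by transposition. The preceding proposition established that $\Phi_l$ is simultaneously a morphism of algebras and of coalgebras. By Frobenius reciprocity, the multiplication $m$ and comultiplication $m^*$ on each of $R(G,H)$ and $R(G/H,1)$ are mutually adjoint, so dualizing the equation $\Phi_l \circ m_{G/H, 1} = m_{G, H} \circ (\Phi_l \otimes \Phi_l)$ yields $m_{G/H,1}^* \circ \Psi_l = (\Psi_l \otimes \Psi_l) \circ m_{G,H}^*$, i.e.\ $\Psi_l$ is a coalgebra morphism; the analogous transpose of the coalgebra diagram for $\Phi_l$ shows $\Psi_l$ is an algebra morphism. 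The one point requiring care is the left-versus-right adjoint direction for inflation, but this is rendered moot by semisimplicity in characteristic zero, where invariants and coinvariants coincide canonically.
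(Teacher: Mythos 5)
Your proposal is correct and follows essentially the same route as the paper: the paper cites tensor--hom adjunction for the functorial adjunction (which your twist-plus-invariants splicing spells out, including the correct observation that invariants and coinvariants coincide in characteristic zero), and then obtains the (co)algebra property for $\Psi_l$ by dualizing the corresponding property of $\Phi_l$ using nondegeneracy of the forms and the mutual adjointness of $m$ and $m^*$. Your version simply supplies the details the paper leaves implicit.
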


\begin{proof} The first statement follows from tensor-hom adjunction.  The second statement follows by nondegeneracy of the forms on the Grothendieck groups, adjointness, and the fact that $\Phi_l$ is a homomorphism of (co)algebras.\end{proof} 

It is clear that $\Psi_l \circ \Phi_l$ is naturally isomorphic to the identity functor and that $\Phi_l \circ \Psi_l$ is naturally isomorphic to the functor $I_l$ on $\rep(G_n(G, H))$ given by projection to the $l^{\otimes n}$-isotypic piece for the $H^n$-action (recall this is always a $G_n(G, H)$-subrepresentation).  At the level of Grothendieck groups, we obtain:

\begin{proposition} $\Psi_l \circ \Phi_l \colon R(G/H, 1) \rightarrow R(G/H, 1)$ is the identity, and $$\Phi_l \circ \Psi_l \colon R(G, H) \rightarrow R(G, H)$$ is orthogonal projection onto the sub-(co)algebra $\Phi_l(R(G/H, 1)).$\end{proposition}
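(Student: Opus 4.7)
The plan is to deduce both assertions from the author's already-noted natural isomorphisms at the functor level: $\Psi_l \circ \Phi_l \cong \id$ on $\rep(G_n(G/H, 1))$, and $\Phi_l \circ \Psi_l \cong I_l$ on $\rep(G_n(G, H))$, where $I_l$ is projection to the $l^{\otimes n}$-isotypic component for the $H^n$-action. Since $\Phi_l$ and $\Psi_l$ come from exact functors, a natural isomorphism of endofunctors descends to an equality of the associated $\ints$-linear maps on Grothendieck groups. Thus the first statement is immediate, and the second reduces to identifying the Grothendieck-level map induced by $I_l$ with orthogonal projection onto $\Phi_l(R(G/H, 1))$.

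For the first statement, if one wishes to make the natural isomorphism explicit: for $\pi' \in \rep(G_n(G/H, 1))$, the pullback $\phi^*\pi'$ carries trivial $H^n$-action, so $\Phi_l(\pi') = l_n \otimes \phi^*\pi'$ has $H^n$-action given by $l^{\otimes n}$ alone, and hence $\hom_{H^n}(l_n, l_n \otimes \phi^*\pi') \cong \phi^*\pi'$ canonically. Checking that the $G_n(G/H, 1)$-action defined via an arbitrary lift $\tilde g$ matches the original action on $\phi^*\pi'$ is a direct unwinding of definitions.

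For the second statement, let $\pi$ be an irreducible of $G_n(G, H)$. The operator $I_l$ sends $\pi$ to itself if $\pi|_{H^n}$ is $l^{\otimes n}$-isotypic and to $0$ otherwise, so I need to show the first alternative holds precisely when $\pi$ lies in the graded basis of $\Phi_l(R(G/H, 1))$ described in the earlier proposition (which characterizes that basis as those $\pi$ whose restriction to $H^n$ \emph{contains} $l^{\otimes n}$). The two conditions coincide by a small Clifford-theoretic observation: since $H^n \unlhd G_n(G, H)$, the restriction $\pi|_{H^n}$ is a direct sum of characters forming a single $G_n(G, H)$-orbit; but $l^{\otimes n}$ is fixed by all of $S_n$ and (as $G$ is abelian) centralized by the $G^n$-part of $G_n(G, H)$, so its orbit is the singleton $\{l^{\otimes n}\}$. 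Hence "contains $l^{\otimes n}$" is equivalent to "$l^{\otimes n}$-isotypic," and $I_l$ acts as the identity on the basis of $\Phi_l(R(G/H, 1))$ and kills every other irreducible, which is exactly orthogonal projection onto $\Phi_l(R(G/H, 1))$.

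The only nontrivial step is the Clifford-theoretic remark identifying "contains" with "isotypic for" the character $l^{\otimes n}$; everything else is formal from the functor-level natural isomorphisms and Proposition~25 (the basis description of $\Phi_l(R(G/H, 1))$), so I expect no serious obstacle.
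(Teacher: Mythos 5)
Your proof is correct and follows essentially the same route as the paper, which states this proposition as an immediate consequence of the preceding functor-level natural isomorphisms $\Psi_l \circ \Phi_l \cong \id$ and $\Phi_l \circ \Psi_l \cong I_l$ together with the basis description of $\Phi_l(R(G/H,1))$ in Proposition 24. Your Clifford-theoretic remark identifying ``contains $l^{\otimes n}$'' with ``$l^{\otimes n}$-isotypic'' is a valid (and slightly more explicit) version of the observation already made in the proof of Proposition 24, where irreducibility of $\pi$ plus the fact that the $l^{\otimes n}$-isotypic piece is a $G_n(G,H)$-submodule forces the same conclusion.
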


We now define the map mentioned at the start of this section $$\Phi \colon \bigotimes_{l \in H^*} R(G/H, 1) \rightarrow R(G, H)$$ as the product of the maps $\Phi_l$.  Given an $|H^*|$-tuple $\lambda = (\lambda_l)_{l \in H^*}$ of nonnegative integers, let $l(\lambda)$ denote the number of nonzero parts.  Write $G_\lambda(G/H, 1) := \bigotimes_{l \in H^*} G_{\lambda_l}(G/H, 1).$  Given irreducible representations $\pi_l$ of $G_{\lambda_l}(G/H, 1)$, let $\pi_\lambda := \bigotimes_{l \in H^*} \pi_l \in \bold{Rep}-G_\lambda(G/H, 1)$ and also identify it with its class in $\bigotimes_{l \in H^*} R(G/H, 1).$  Let $\mu = (\mu_l)_{l \in H^*}$ be another such tuple, with the same sum of parts $n := \sum_l \mu_l = \sum_l \lambda_l$ as $\lambda$.  Let also $\sigma_l$ be an irreducible representation of $G_{\mu_l}(G/H, 1)$ for each $l \in H^*$.  Then we have:

\begin{theorem} $\Phi$ is a graded, positive, injective map of algebras, and we have $$\langle \Phi(\pi_\lambda), \Phi(\sigma_\mu)\rangle = \delta_{\pi_\lambda, \sigma_\mu} [G : H]^{l(\lambda) - 1}.$$  $\Phi$ is weakly surjective in the sense that every irreducible element of $R(G, H)$ occurs as a constituent of some element of the image of $\Phi$.\end{theorem}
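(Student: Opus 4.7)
The plan is to dispatch the algebraic and positivity parts first, then extract the inner-product formula from Mackey's theorem, and finally deduce injectivity and weak surjectivity.

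Gradedness and positivity follow immediately: each $\Phi_l$ is positive and graded (Propositions 23--24) and $m_{G,H}$ is positive and graded, so their composite $\Phi$ is as well. That $\Phi$ is an algebra morphism follows from Proposition 23 combined with the commutativity of $R(G,H)$: since each $\Phi_l$ is an algebra morphism and the images $\Phi_l(R(G/H,1))$ pairwise commute inside $R(G,H)$, the product over $l \in H^*$ may be reordered componentwise to give $\Phi(\pi_\lambda \sigma_\mu) = \prod_l \Phi_l(\pi_l)\Phi_l(\sigma_l) = \Phi(\pi_\lambda)\Phi(\sigma_\mu)$.

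For the inner-product formula, I would unwind $\Phi(\pi_\lambda) = \ind_{G_{(\lambda)}(G,H)}^{G_n(G,H)} \bigotimes_l \Phi_l(\pi_l)$, where $G_{(\lambda)}(G,H) := \prod_l G_{\lambda_l}(G,H)$, and apply Frobenius reciprocity followed by Mackey's theorem. In analogy with Proposition 16, the double cosets $G_{(\lambda)}(G,H) \backslash G_n(G,H) / G_{(\mu)}(G,H)$ are parametrized by nonnegative integer matrices $(c_{l,l'})$ with row sums $\lambda_l$ and column sums $\mu_{l'}$; each such matrix contributes a Mackey summand that, after restriction to $H^n$, pairs the character $\bigotimes_l l^{\otimes c_{l,l'}}$ against $\bigotimes_{l'} l'^{\otimes c_{l,l'}}$ in each block. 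The orthogonality of the sub-$T$-groups $\Phi_l(R(G/H,1))$ from Proposition 25, together with $\Psi_l \circ \Phi_l = \id$ from Proposition 26, forces these summands to vanish unless $l = l'$ throughout, i.e.\ unless $(c_{l,l'})$ is diagonal. This in turn forces $\lambda = \mu$ and collapses the remaining sum to $\prod_l \langle \pi_l, \sigma_l \rangle$ up to a single multiplicative factor. I expect that factor to appear as the index $[G_\lambda(G,H) : G_{(\lambda)}(G,H)]$, where $G_\lambda(G,H)$ is the full stabilizer in $G_n(G,H)$ of the character $\bigotimes_l l^{\otimes \lambda_l}$ of $H^n$ --- namely the subgroup of block-diagonal monomial matrices whose \emph{aggregate} $G$-entries sum into $H$, as opposed to the stronger block-by-block condition defining $G_{(\lambda)}(G,H)$; a direct count of cosets, using that the block sums may be any tuple in $G^{l(\lambda)}$ whose total lies in $H$, identifies this index with $[G:H]^{l(\lambda) - 1}$.

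Injectivity is an immediate consequence of the inner-product formula, as the $\Phi(\pi_\lambda)$ are positive, pairwise orthogonal, and nonzero, hence linearly independent. For weak surjectivity I would invoke Clifford theory: given any irreducible $\pi$ of $G_n(G,H)$, normality of $H^n$ in $G_n(G,H)$ (using that $G$ is abelian, so conjugation on $H^n$ reduces to coordinate permutation) together with the irreducibility of $\pi$ forces $\pi|_{H^n}$ to be isotypic for a single $G_n(G,H)$-orbit of characters of $H^n$; since this action factors through a permutation action, the orbit is determined by a multiplicity vector $\lambda = (\lambda_l)_{l \in H^*}$. Choosing an orbit representative $\bigotimes_l l^{\otimes \lambda_l}$ and applying Proposition 24 together with Frobenius reciprocity then realizes $\pi$ as a constituent of $\Phi(\pi'_\lambda)$ for a suitable tuple $\pi'_l \in \irr(G_{\lambda_l}(G/H,1))$. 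The main obstacle I anticipate is the double-coset bookkeeping in the Mackey computation, particularly isolating the $[G:H]^{l(\lambda)-1}$ factor cleanly from the combinatorics that force the non-diagonal terms to vanish.
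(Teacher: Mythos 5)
Your proposal follows essentially the same route as the paper's proof: gradedness, positivity and the algebra-morphism property are assembled from Propositions 23--26; the inner-product formula comes from Frobenius reciprocity plus Mackey's double-coset formula, with the off-diagonal terms killed by restricting to $H^n$ and observing that conjugation acts on characters of $H^n$ only through the permutation part; and weak surjectivity is the same Clifford-theoretic argument via the $H^n$-isotypic decomposition and Proposition 24. Your accounting of the factor $[G:H]^{l(\lambda)-1}$ as the index of the block-by-block restricted subgroup in the aggregate-sum block-diagonal subgroup is the same count the paper performs directly, by choosing one class of $G/H$ per nonzero block subject to the total lying in $H$ (note, though, that your opening claim that the double cosets are parametrized by integer matrices with row sums $\lambda_l$ and column sums $\mu_{l'}$ is the parametrization for the \emph{unrestricted} wreath product; for the restricted groups each matrix class splits into several double cosets, which is precisely where this index enters). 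The one substantive step you leave implicit, and which the paper states explicitly, is why each of the $[G:H]^{l(\lambda)-1}$ block-diagonal double cosets contributes exactly $\prod_l \langle \pi_l, \sigma_l\rangle$: a priori the Mackey term for a diagonal representative $\gamma = (\gamma_l)_l$ is $\prod_l \langle \Phi_l(\pi_l), \Phi_l(\sigma_l)^{\gamma_l}\rangle$, and the orthogonality of the subalgebras $\Phi_l(R(G/H,1))$ only separates distinct characters $l$ --- it does not by itself rule out a nontrivial twist of $\sigma_l$ by $\gamma_l$ within the same $l$-block. The paper handles this by asserting that diagonal elements of $G_{\lambda_l}(G,G)$ centralize $\Phi_l(\sigma_l)$; you need to supply (or at least cite) that fact, since conjugation by a diagonal element descends to a possibly nontrivial automorphism of $G_{\lambda_l}(G/H,1)$, so the claim requires an argument rather than being automatic. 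With that point made explicit, your argument coincides with the paper's.
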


\begin{proof} It follows from previous results that $\Phi$ is a positive graded map of algebras.  By positivity, showing the given inner product formula will imply injectivity, so we start there, which is just an application of Mackey's double coset formula.  In particular, we have $$\begin{array} {lcl}&&\langle \Phi(\pi_\lambda), \Phi(\sigma_\mu)\rangle\\&=&\langle \ind_{G_\lambda(G, H)}^{G_n(G, H)} \bigotimes_{l \in H^*} \Phi_l(\pi_l), \ind_{G_\mu(G, H)}^{G_n(G, H)}\bigotimes_{l \in H^*} \Phi_l(\sigma_l)\rangle_{G_n(G, H)}\\&=&\sum_{\gamma \in G_\lambda\backslash G_n/G_\mu} \langle \bigotimes_{l \in H^*} \pi_l, (\bigotimes_{l \in H^*} \sigma_l)^\gamma\rangle_{G_\lambda \cap\gamma G_\mu \gamma^{-1}} \end{array}$$ Noting $H^n \subset G_\lambda \cap \gamma G_\mu \gamma^{-1}$, we have the bound $$\left\langle \bigotimes_{l \in H^*} \pi_l, \left(\bigotimes_{l \in H^*} \sigma_l\right)^\gamma\right\rangle_{G_\lambda \cap\gamma G_\mu \gamma^{-1}}$$ $$\leq \left(\prod_{l \in H^*} \deg(\pi_l)\deg(\sigma_l)\right) \left\langle \bigotimes_{l \in H^*} l^{\otimes \lambda_l}, \left(\bigotimes_{l \in H^*} l^{\otimes \mu_l}\right)^\gamma\right\rangle_{H^n}$$  As $G^n$ centralizes $H^n$, twisting by $\gamma$ amounts to just twisting by some element of $S_n$, permuting the tensor factors, and hence the final inner product is $0$ unless both $\lambda = \mu$ and $\bar{\gamma} \in S_n$ stabilizes the blocks of $G_\lambda(G/H, 1)$.  In this case we can clearly take the double coset representative $\gamma$ to be diagonal, so to compute the original inner product we need only sum over a collection of diagonal double coset representatives.  From the definition of $\Phi_l$, we see that any diagonal element of $G_{\lambda_i}(G, G)$ centralizes $\Phi_l(\sigma_l)$, and we conclude that each term of the above Mackey sum involving a diagonal representative yields a contribution of $1$ to the sum, as in that case $G_\lambda \cap \gamma G_\mu \gamma^{-1} = G_\lambda$ and it is just an inner product of an irreducible representation of $G_\lambda$ with itself.  The number of classes of the double coset space which have a diagonal representative is clearly $[G: H]^{l(\lambda) - 1}$ - indeed they are formed by a choice of element of $G/H$ for each nonzero $\lambda_i \times \lambda_i$ block such that the entire sum is $1 \in G/H$.

For our weak form of surjectivity, let $\pi \in \rep(G_n(G, H))$ be an irreducible representation.  The action of $S_n$ on $\pi$ induces an action of $S_n$ on the set of $H^n$-isotypic pieces by permuting the tensor factors, so we conclude some there is a nonzero $H^n$-isotypic piece of $\pi|_{H^n}$ of type $l_1^{\otimes \lambda_1} \otimes \cdots \otimes l_{|H|}^{\otimes \lambda_{|H|}}$ where $l_1, ..., l_{|H|}$ is an ordering of the linear characters of $H$. Then, $\pi|_{G_\lambda}$ contains some irreducible representation $\pi_1 \otimes \cdots \otimes \pi_{|H|}$ whose restriction $\pi_i|_{H^{\lambda_i}}$ contains $l_i^{\otimes \lambda_i}$.  By Proposition 24, we then have $\pi_i = \Phi_{l_i}(\pi_i')$ for some $\pi_i'$.  But then by Frobenius reciprocity $\pi$ is an irreducible constituent of $\Phi(\pi_\lambda)$, as needed.\end{proof}

Writing $$\Phi = m_{G, H}^{(|H^*|)} \circ \bigotimes_{l \in H^*} \Phi_l : \bigotimes_{l \in H^*} R(G/H, 1) \rightarrow R(G, H)$$ we also have the adjoint map $$\Psi = \bigotimes_{l \in H^*} \Psi_l \circ m_{G, H}^{*(|H^*|)} : R(G, H) \rightarrow \bigotimes_{l \in H^*} R(G/H, 1)$$ where $m_{G, H}^{(|H^*|)}$ and $m_{G, H}^{*(|H^*|)}$ denote iterated multiplication/comultiplication.  The first part of the previous theorem can then be recast as

\begin{corollary} $\Psi$ is a positive, graded, surjective map of coalgebras.  No positive element lies in its kernel. \end{corollary}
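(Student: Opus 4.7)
The plan is to read off each of the four assertions from the structure of $\Psi$ and the data already furnished by the preceding theorem. Recall that $\Psi=\bigl(\bigotimes_{l\in H^*}\Psi_l\bigr)\circ m_{G,H}^{*(|H^*|)}$ and that $\Psi_l=\Phi_l^*$. Positivity and gradedness of $\Psi$ are immediate: each $\Psi_l$ is induced by the exact functor $\pi\mapsto\hom_{H^n}(l_n,\pi)$ and is graded of degree $0$, and $m_{G,H}^{*(|H^*|)}$ is the iterated comultiplication of the PSH-algebra $R(G,H)$, which is positive and graded. For the coalgebra morphism claim, I would invoke Proposition 26 to get that each $\Psi_l$ is a coalgebra morphism, note that tensor products of coalgebra morphisms are coalgebra morphisms, and observe that $m_{G,H}^{*(|H^*|)}\colon R(G,H)\to R(G,H)^{\otimes|H^*|}$ is a coalgebra morphism because $R(G,H)$ is cocommutative (so iterated comultiplication is the cooperation into the componentwise comultiplication on the tensor power). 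Composing gives that $\Psi$ is a coalgebra morphism.

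For surjectivity, my plan is to use the adjointness $\Psi=\Phi^*$ together with the explicit Gram formula of the previous theorem. Concretely, the inner-product identity $\langle\Phi(\pi_\lambda),\Phi(\sigma_\mu)\rangle=\delta_{\pi_\lambda,\sigma_\mu}[G:H]^{l(\lambda)-1}$ translates, by adjointness, to $\Psi(\Phi(\pi_\lambda))=[G:H]^{l(\lambda)-1}\pi_\lambda$ for every irreducible $\pi_\lambda$. Hence $\Psi\circ\Phi$ is a diagonal operator with nonzero entries on the distinguished basis, so $\Psi$ is surjective (after tensoring with $\mathbb{Q}$, which is the content required for surjectivity of a graded $\mathbb{Z}$-linear map between free abelian groups of the same finite graded rank).

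The final assertion, that no positive element lies in $\ker\Psi$, is where the weak surjectivity of $\Phi$ enters. Suppose $x\in R(G,H)$ is positive and $\Psi(x)=0$. If $x\ne 0$, pick an irreducible $\omega$ of $R(G,H)$ occurring in $x$ with positive coefficient. By the weak surjectivity clause of the preceding theorem, there exist $\lambda$ and irreducibles $\pi_l\in\irr(G_{\lambda_l}(G/H,1))$ such that $\omega$ occurs as an irreducible constituent of $\Phi(\pi_\lambda)$. Since both $x$ and $\Phi(\pi_\lambda)$ are positive elements of the $T$-group $R(G,H)$ both containing $\omega$, one has $\langle x,\Phi(\pi_\lambda)\rangle>0$. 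But by adjointness $\langle x,\Phi(\pi_\lambda)\rangle=\langle\Psi(x),\pi_\lambda\rangle=0$, a contradiction.

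Among these steps the surjectivity claim is the real obstacle, since $\Psi\circ\Phi$ is only a nonzero diagonal (not the identity) on the $\pi_\lambda$; I would be content to read ``surjective'' as ``full-rank as a graded $\mathbb{Z}$-linear map,'' which is what the Gram-matrix computation delivers without further work. The positivity/gradedness/coalgebra statements are essentially structural, and the ``no positive element in the kernel'' assertion is the genuine payoff of combining weak surjectivity of $\Phi$ with positivity of the $T$-group pairing.
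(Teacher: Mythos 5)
Your proposal is correct and follows essentially the route the paper intends: the paper offers no separate proof of this corollary, presenting it purely as the dualization (via the adjointness $\Psi=\Phi^*$) of the first part of Theorem 28, which is exactly what you carry out. Your caveat about surjectivity is well taken --- since $\Psi\circ\Phi$ acts on the irreducibles $\pi_\lambda$ by the scalars $[G:H]^{l(\lambda)-1}$, the Gram computation only yields full rank (surjectivity after tensoring with $\mathbb{Q}$), and the paper supplies no further argument for integral surjectivity either, so your reading is the one the stated results actually support; the remaining parts (positivity, gradedness, the coalgebra property via Proposition 26 and cocommutativity, and the kernel statement via weak surjectivity of $\Phi$ plus positivity of the pairing) are all argued correctly.
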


Note that in the case of usual wreath products, i.e. $G = H$, we have $[G : H] = 1$, so by the inner product formula in Theorem 28 we have that $\Phi$ sends irreducibles to irreducibles, and the weak surjectivity condition becomes usual surjectivity, so $\Phi$ is surjective and hence is a bijective isometry, so $\Phi^{-1} = \Phi^* = \Psi$.  But $\Psi$ is a map of coalgebras, so $\Phi = \Psi^{-1}$ is as well, and we obtain

\begin{corollary} For the case of usual wreath products, i.e. $G = H$, we have that $\Phi$ and $\Psi$ are mutually inverse and adjoint, positive (irreducible-to-irreducible), graded isomorphisms respecting both the algebra and coalgebra structures.  \end{corollary}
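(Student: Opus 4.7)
The plan is to unpack the two special features that appear when $G = H$: the index $[G:H] = 1$ collapses the inner product formula of Theorem 28 to the Kronecker delta, and weak surjectivity becomes genuine surjectivity. All remaining assertions will follow formally from adjointness and the (co)algebra properties already established.

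First I would specialize Theorem 28 to $G = H$. The formula $\langle \Phi(\pi_\lambda), \Phi(\sigma_\mu) \rangle = \delta_{\pi_\lambda, \sigma_\mu} [G:H]^{l(\lambda) - 1}$ reduces to $\langle \Phi(\pi_\lambda), \Phi(\sigma_\mu) \rangle = \delta_{\pi_\lambda, \sigma_\mu}$. Since the $\pi_\lambda$ form an orthonormal basis of $\bigotimes_{l \in H^*} R(G/H, 1)$ and $\Phi$ preserves inner products, $\Phi$ is an isometry of $T$-groups. In particular each $\Phi(\pi_\lambda)$ is irreducible in $R(G, H)$ (norm-one positive element), and distinct $\pi_\lambda$ go to distinct irreducibles.

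Next I would upgrade the weak surjectivity statement to actual surjectivity. Let $\omega \in \Omega(R(G, H))$; by Theorem 28 there exists some $\pi_\lambda$ with $\omega \leq \Phi(\pi_\lambda)$. Since $\Phi(\pi_\lambda)$ is itself irreducible by the previous step, we must have $\omega = \Phi(\pi_\lambda)$. Thus $\Phi$ gives a bijection on distinguished bases, and is a bijective isometry of $T$-groups.

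It remains to identify $\Phi^{-1}$ with $\Psi$ and to transport the algebra and coalgebra properties across the inverse. Since $\Psi$ is the adjoint of $\Phi$ (as the tensor product of the adjoint pairs $(\Phi_l, \Psi_l)$ from Proposition 26, precomposed/postcomposed with mutually adjoint iterated (co)multiplications), and $\Phi$ is a bijective isometry, we get $\Phi^{-1} = \Phi^* = \Psi$, so in particular $\Phi$ and $\Psi$ are mutually inverse and mutually adjoint. By Theorem 28 (and Proposition 23 componentwise) $\Phi$ is an algebra morphism, so its inverse $\Psi$ is an algebra morphism as well; conversely each $\Psi_l$ is a coalgebra morphism by Proposition 26, so $\Psi = \bigotimes_l \Psi_l \circ m^{*(|H^*|)}_{G,H}$ is a coalgebra morphism, and hence so is $\Phi = \Psi^{-1}$. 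This gives all parts of the corollary.

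The only step requiring any thought is the bijectivity of $\Phi$: it is precisely the combination of the irreducible-to-irreducible property (from the collapsed inner product formula) with the weak surjectivity clause of Theorem 28 that forces the image to exhaust $\Omega(R(G, G))$. Everything else is a formal consequence of having an isometric algebra isomorphism between self-dual Hopf objects.
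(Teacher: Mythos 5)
Your proposal is correct and follows essentially the same route as the paper: specialize the inner product formula of Theorem 28 to $[G:H]=1$ to see that $\Phi$ is an isometry sending irreducibles to irreducibles, upgrade weak surjectivity to surjectivity, conclude $\Phi^{-1}=\Phi^*=\Psi$, and transport the algebra and coalgebra properties across the inverse. The paper's argument is just a more compressed version of yours.
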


This case recovers the usual identification of $R(G, G)$ with the $|G|$-fold tensor power of the Hopf algebra of integral symmetric functions.

\end{document}